\documentclass[12pt,leqno]{amsart}

\usepackage{amsmath,amssymb,amsthm}
\usepackage{cite}
\usepackage[greek,english]{babel}
\usepackage{lmodern}
\usepackage[margin=1.45in,footskip=1.5cm]{geometry}
\usepackage{mathtools}
\usepackage{microtype}
\usepackage[T1]{fontenc}
\usepackage[utf8]{inputenc}

\makeatletter
\@namedef{subjclassname@2020}{\textup{2020} Mathematics Subject Classification}
\makeatother

\def\mathbi#1{\textbf{\em #1}}

\newtheorem{theorem}{Theorem}[section]
\theoremstyle{definition}
\newtheorem{definition}[theorem]{Definition}

\title[Modal probability spaces]{On the foundations of logic and probability, II\\ Modal probability spaces}

\author{Lo\"{i}c Cosyns}
\address{Department of Mathematical Sciences, University of Durham, UK}
\email{l.j.g.cosyns@gmail.com}
\thanks{This paper is an abridged version of the third chapter of the author's thesis \cite{Cosyns26} presented for the degree of Doctor of Philosophy of the University of Durham (UK) and is the second of a series on the foundations of logic and probability. A companion paper on modal sentential calculi has been submitted and is under review; a third paper on formal agnoiology is in preparation; a final paper on iterated modalities is under consideration.}

\subjclass[2020]{Primary 03C99}
\keywords{Foundations of probability, foundations of modal logic}

\begin{document}

\begin{abstract}
A class of models which brings to the fore the inner logic of the calculus of probability instead of assigning probabilities to sentences is introduced and investigated.
\end{abstract}

\maketitle


\section{Introduction}

This study addresses the problem of the efficient integration of logic and probability \cite{Carnap50,Gaifman64,ScottKrauss67}. The adjective `efficient' is to be taken seriously. For logic and probability need not merely be integrated; they need to be integrated in a way which is primarily satisfactory to mathematicians. For if philosophers are always keen to devise new models to integrate logic and probability, few mathematicians venture into the foundations of probability beyond those established by Kolmogorov \cite{Kolmogorov48,Kolmogorov56}, the general consensus being that a revamping of those foundations is, to paraphrase Tarski, neither necessary nor desirable. Accordingly, any viable attempt to integrate logic and probability must leave the axiomatic treatment of probability formulated by Kolmogorov unscathed; in particular, any viable attempt to integrate logic and probability must be carried out using the tools and methods of measure theory. Only if these requirements are met will a mathematician be interested in further investigations into the foundations of probability. Of course, if, incidentally, a problem relating to those foundations is resolved or a neglected chapter of the calculus of probability is revealed, then such investigations will be viewed more favourably.

The standard model for probability is that of a probability space $(X,\Sigma,P)$, where $X$ is a non-empty set, $\Sigma$ is a $\sigma$-field of subsets of $X$, and $P$ is a non-negative, countably additive, normed measure on $\Sigma$. Probability spaces are the dominant model in pure and applied probability. From the standpoint of the foundations of probability, however, probability spaces are not exempt from a certain criticism. A classic point of contention concerns the countable additivity of the probability measure $P$ \cite{DeFinetti75,Kolmogorov48}; another one concerns the representation of every event as a set, which can be viewed as an impoverishment of the formalism \cite{Los55,ScottKrauss67}. More problematic, however, is the existence of non-empty negligible sets, which prevents $P$ from being strictly positive \cite{Kolmogorov48,Los55}. This difficulty is routinely circumvented by forming the probability algebra $(\Sigma/\mathcal{N}_P,\mathbi{P})$ of $(X,\Sigma,P)$, where $\mathcal{N}_P$ is the $\sigma$-ideal of $P$-negligible sets. Factoring out a class of measurable sets, however, although mathematically convenient, it must be acknowledged, does not solve the problem of its existence.

This study addresses that specific issue whilst efficiently integrating logic and probability, which remains the primary objective of this paper. The idea is simple and consists in bringing to the fore the inner logic of the calculus of probability instead of assigning probabilities to sentences. This is discussed in detail in Section 3 after some preliminary considerations in Section 2. Note that this study presupposes some familiarity with the first paper of this series on the foundations of logic and probability \cite{Cosyns27}. In particular, some proofs in the present study heavily rely on theorems established therein. To facilitate cross-referencing, the roman numeral `I' prefixes every theorem of \cite{Cosyns27} called upon in this study (e.g. Theorem I.4.24 refers to Theorem 4.24 in \cite{Cosyns27}).


\section{Preliminaries}

Let $(\mathfrak{A},Z,\mathcal{E},\zeta)$ be a model for modal sentential calculi \cite{Cosyns27}, where $\mathfrak{A}$ is an almost measurable Lindenbaum--Tarski algebra of the language $\mathcal{L}$ of classical sentential calculi, $Z$ is its Stone space, $\mathcal{E}=\mathcal{B}a(Z)/\mathcal{N}$ is the algebra of Baire subsets of $Z$ modulo nowhere dense sets, and $\zeta$ is a Boolean $\sigma$-isomorphism of $\mathfrak{A}$ into $\mathcal{E}$ such that, for every equivalence class $[F]$ of formulas of $\mathcal{L}$, the open-closed representative $\zeta([F])^\ast$ of the image $\zeta([F])$ of $[F]$ is nowhere dense\linebreak in $Z$ (`impossible') if and only if it is the empty set.

The question naturally arises as to what extent $(\mathfrak{A},Z,\mathcal{E},\zeta)$ is compatible with the axiomatic treatment of probability formulated by Kolmogorov \cite{Kolmogorov48,Kolmogorov56}. Since $\mathfrak{A}$ and $\mathcal{E}$ are isomorphic, it is immediately clear that this question can be reduced to that of the existence of a strictly positive, countably additive probability measure ${\bf p}$ on $\mathfrak{A}$, which, it is recalled, satisfies the countable chain condition (ccc) and is weakly countably distributive \cite{Cosyns27}.

Von Neumann \cite{VonNeumann15} famously asked whether these conditions are sufficient for the existence of a strictly positive, countably additive measure on a $\sigma$-complete Boolean algebra; the next theorem shows, in the context of this study, that this is not the case in general.

\begin{theorem}[Talagrand \cite{Talagrand08}]
{\it There exists an almost measurable Lindenbaum--Tarski algebra $\mathfrak{A}$ of the language $\mathcal{L}$ of classical sentential calculi which does not carry a strictly positive, countably additive probability measure {\bf p}.}
\end{theorem}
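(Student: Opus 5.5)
The plan is to transfer Talagrand's solution of von Neumann's problem into the present setting rather than reprove it. Talagrand constructed a $\sigma$-complete Boolean algebra $\mathfrak{B}$ which carries a strictly positive exhaustive (hence Maharam) submeasure. As a consequence $\mathfrak{B}$ satisfies the countable chain condition and is weakly countably distributive. Yet $\mathfrak{B}$ carries no strictly positive countably additive measure; indeed it carries no non-zero finitely additive measure $\mu$ with $\mu\le\nu$ for that submeasure, so no strictly positive measure at all. What remains is to exhibit $\mathfrak{B}$, or a suitable algebra built from it, as an almost measurable Lindenbaum--Tarski algebra $\mathfrak{A}$ of $\mathcal{L}$.

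First, I would recall from \cite{Cosyns27} the characterisation of almost measurable Lindenbaum--Tarski algebras of $\mathcal{L}$. I expect it to amount to this: $\mathfrak{A}$ is a $\sigma$-complete Boolean algebra (the Lindenbaum--Tarski algebra of $\mathcal{L}$ enlarged by countable disjunctions, on a suitable set of sentential variables). It must satisfy ccc and be weakly countably distributive, and it must admit a $\sigma$-embedding $\zeta$ into $\mathcal{E}=\mathcal{B}a(Z)/\mathcal{N}$ of the required kind. I would invoke the theorem of \cite{Cosyns27} stating that every $\sigma$-complete, ccc, weakly countably distributive Boolean algebra of the right cardinality is (isomorphic to) such an algebra. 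This is the Loomis--Sikorski type representation via the Stone space, using the fact that in a ccc algebra meagre and nowhere dense Baire sets coincide modulo the relevant ideal.

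Second, I would apply this to Talagrand's $\mathfrak{B}$. I would choose a generating family of $\mathfrak{B}$ as a $\sigma$-algebra and take as sentential variables one letter per generator; Talagrand's algebra is countably generated, being a quotient of the Borel sets of a Cantor space. The free $\sigma$-algebra on these letters, i.e. the Lindenbaum--Tarski algebra of $\mathcal{L}$ with countable connectives, then maps onto $\mathfrak{B}$. I would then present $\mathfrak{B}$ as the Lindenbaum--Tarski algebra of the theory whose axioms are the formulas identified by this quotient. Almost measurability would follow from ccc and weak countable distributivity, which hold because the Maharam submeasure is exhaustive and strictly positive and continuous from above.

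Finally, suppose ${\bf p}$ were a strictly positive countably additive probability on $\mathfrak{A}\cong\mathfrak{B}$. Then $\mathfrak{B}$ would be a measure algebra, contradicting Talagrand's theorem that his Maharam algebra is not a measure algebra. The main obstacle is the second step: matching Talagrand's abstract $\sigma$-algebra exactly to the paper's notion of an almost measurable Lindenbaum--Tarski algebra. In particular, one must check that the condition ``$\zeta([F])^\ast$ nowhere dense iff empty'' holds. My first attempt would be the observation that $\zeta$ is injective into $\mathcal{E}$ and that clopen sets are nowhere dense only when empty.
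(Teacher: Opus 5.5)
Your proposal is correct and follows the paper's route. The paper gives no argument beyond citing Talagrand and implicitly treating his Maharam algebra ($\sigma$-complete, ccc, weakly countably distributive, with no strictly positive measure) as an almost measurable Lindenbaum--Tarski algebra, which is the transfer you spell out; the clause on $\zeta([F])^\ast$ is trivial, since an open nowhere dense set is empty.

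One small correction: ``no non-zero measure $\mu\le\nu$'' does not by itself rule out a strictly positive measure. The link is that a strictly positive countably additive measure on a Maharam algebra induces the same intrinsic topology as $\nu$, forcing $\nu$ to be equivalent to a measure; this is the content of the ``not a measure algebra'' corollary you correctly invoke at the end.
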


Theorem 2.1 shows that not every almost measurable Lindenbaum--Tarski algebra of the language of classical sentential calculi carries a strictly positive, countably additive probability measure. It follows that the question of the compatibility of $(\mathfrak{A},Z,\mathcal{E},\zeta)$ with the calculus of probability, which first morphed into that of the existence of a strictly positive, countably additive probability measure ${\bf p}$ on $\mathfrak{A}$, can yet be reframed as the question of the identification of a necessary and sufficient criterion for the existence of ${\bf p}$.

\begin{definition}[Kelley \cite{Kelley59}]
Let $\mathcal{A}\subseteq\mathfrak{A}$ be a non-empty set of equivalence classes of formulas of the language $\mathcal{L}$ of classical sentential calculi. For every indexed family $\mathcal{B}=\{[F_1],[F_2],\dots,[F_n]\}\subseteq\mathcal{A}$ of (not necessarily pairwise disjoint) equivalence classes of formulas of $\mathcal{L}$, the quantity
\begin{gather*}
\textstyle{\#(\mathcal{A})=\inf\,\{m(\mathcal{B})/n(\mathcal{B}):\mathcal{B}\subseteq\mathcal{A}\},}
\end{gather*}
where $m(\mathcal{B})$ is the maximum number of equivalence classes of formulas of $\mathcal{L}$ in $\mathcal{B}$ having non-empty intersection and $n(\mathcal{B})$ is the number of equivalence classes of formulas in $\mathcal{B}$, is called the {\it intersection number of $\mathcal{A}$}.
\end{definition}

\begin{theorem}[Kelley \cite{Kelley59}]
{\it An almost measurable Lindenbaum--Tarski algebra $\mathfrak{A}$ of the language $\mathcal{L}$ of classical sentential calculi carries a strictly positive, countably additive probability measure ${\bf p}$ if and only if $\mathfrak{A}\setminus\{[{\bf 0}]\}$ is the union of countably many non-empty sets of equivalence classes of formulas of $\mathcal{L}$, each of which has a positive intersection number.}
\end{theorem}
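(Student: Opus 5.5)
Since $\mathfrak{A}$ is a Boolean $\sigma$-algebra (being almost measurable, it is $\sigma$-complete, ccc and weakly countably distributive), the statement is Kelley's criterion for Boolean algebras. I would prove it through its two directions, with the Hahn--Banach (or minimax) separation argument carrying the sufficiency.

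\emph{Necessity.} Suppose ${\bf p}$ is strictly positive and countably additive. Put $\mathcal{A}_k=\{[F]\in\mathfrak{A}:{\bf p}([F])\ge 1/k\}$ for $k\ge 1$. By strict positivity, $\mathfrak{A}\setminus\{[{\bf 0}]\}=\bigcup_k\mathcal{A}_k$. Now take $\mathcal{B}=\{[F_1],\dots,[F_n]\}\subseteq\mathcal{A}_k$. Integrate the function $\sum_i\chi_{[F_i]}$, realised as a simple function on the Stone space $Z$ through the measure induced by ${\bf p}$ (only finite additivity is needed here). Its integral is $\sum_i{\bf p}([F_i])\ge n/k$. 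Its supremum is at most $m(\mathcal{B})$, since at any point the number of $[F_i]$ containing it is the size of a subfamily with non-empty meet. Hence $m(\mathcal{B})\ge n/k$, so $\#(\mathcal{A}_k)\ge 1/k>0$.

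\emph{Sufficiency.} Suppose $\mathfrak{A}\setminus\{[{\bf 0}]\}=\bigcup_k\mathcal{A}_k$ with $\#(\mathcal{A}_k)=\delta_k>0$. The core step is Kelley's lemma: if $\#(\mathcal{A})=\delta>0$, there is a finitely additive probability $\mu$ on $\mathfrak{A}$ with $\mu\ge\delta$ on $\mathcal{A}$. I would prove it via the space $S(\mathfrak{A})$ of $\mathfrak{A}$-simple functions, which is isomorphic to $C(Z)$ restricted to clopen step functions, with the sup norm.
\begin{enumerate}
\item Let $K$ be the convex hull of $\{\chi_a-\delta:a\in\mathcal{A}\}$. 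By the definition of $\#$, every element of $K$ has supremum $\ge 0$. This is clear for rational convex combinations by clearing denominators and repeating elements of $\mathcal{B}$, which is allowed since $\mathcal{B}$ is an indexed family. For real coefficients, approximate.
\item So $K$ is disjoint from the open convex cone of functions with $\sup<0$.
\item Separate by a positive linear functional, normalise it so that $\mu(1)=1$, and obtain a finitely additive $\mu$ with $\mu(a)\ge\delta$ for $a\in\mathcal{A}$.
\end{enumerate}
Setting $\mu^\ast=\sum_k2^{-k}\mu_k$ gives a strictly positive finitely additive probability on $\mathfrak{A}$.

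\emph{Main obstacle.} The remaining step is upgrading $\mu^\ast$ to a countably additive strictly positive measure, and I expect this to be the hardest part. Here I would use that $\mathfrak{A}$ is $\sigma$-complete and weakly countably distributive, citing the relevant results of the first paper. I would try the following.
\begin{enumerate}
\item Decompose $\mu^\ast=\mu_c+\mu_p$ into countably additive and purely finitely additive parts (Yosida--Hewitt).
\item Show that $\mu_c$ is still strictly positive. Suppose $\mu_c(a)=0$ with $a\neq[{\bf 0}]$. Use the ccc to reduce to countably many relevant elements.
\item Then use weak $\sigma$-distributivity, which is equivalent to the absence of non-trivial purely finitely additive behaviour detected by the $\mathcal{A}_k$, to reach a contradiction with $\mu^\ast\ge 2^{-k}\delta_k$ on $\mathcal{A}_k$.
\end{enumerate}
Should this fail directly, the fallback is the route through Maharam's characterisation. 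A $\sigma$-complete algebra carrying a strictly positive finitely additive measure is ccc. If moreover it is weakly distributive, it carries a strictly positive continuous submeasure, namely a Maharam submeasure built from $\mu^\ast$ via $\nu(a)=\inf\{\sup_n\mu^\ast(a_n)\}$ over sequences $a_n\downarrow\ge a$. One then checks that this $\nu$ is in fact additive, because it is dominated above and below by the finitely additive uniform control that the intersection numbers provide. This is the delicate point, and it is exactly where Talagrand's example (Theorem 2.1) shows that weak distributivity alone is insufficient, so the intersection-number hypothesis must be used again there.
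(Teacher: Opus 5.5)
\textbf{There is a genuine gap: the passage from a strictly positive finitely additive measure to a countably additive one is flagged but never proved.}

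Your necessity argument, your Hahn--Banach proof of Kelley's lemma, and the assembly $\mu^\ast=\sum_k 2^{-k}\mu_k$ are sound. Together they give the equivalence between the intersection-number condition and a strictly positive finitely additive ${\bf q}$. That is one half of the chain the paper asserts right after citing Kelley. The other half, ${\bf q}\Rightarrow{\bf p}$, is where your proposal stops short. There are three problems with what you offer in its place:
\begin{enumerate}
\item Your step 3 rests on a false heuristic. Weak countable distributivity does \emph{not} exclude purely finitely additive functionals. The Lebesgue measure algebra is weakly countably distributive, yet a Banach limit of $n\,\mathrm{Leb}(a\wedge[0,1/n])$ is a non-zero purely finitely additive functional on it. Also, neither the ccc nor the bounds $2^{-k}\delta_k$ is what produces the contradiction.
\item Your fallback is vacuous. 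If $a_n\downarrow$ with $a_n\geq a$, then $\sup_n\mu^\ast(a_n)=\mu^\ast(a_0)\geq\mu^\ast(a)$, with equality for the constant sequence. So your $\nu$ is just $\mu^\ast$.
\item The intersection numbers need not be used a second time. Talagrand's algebra is an obstruction only because it carries no strictly positive finitely additive measure at all. Once $\mu^\ast$ exists, $\sigma$-completeness and weak countable distributivity suffice.
\end{enumerate}

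\textbf{How to close the gap.} Regularise along \emph{increasing} sequences. Put $\lambda(a)=\inf\{\lim_n\mu^\ast(a_n):a_n\uparrow a\}$, the infimum over non-decreasing sequences with supremum exactly $a$.
\begin{itemize}
\item Using $\sup_n(c_n\wedge a)=(\sup_n c_n)\wedge a$, one checks directly that $\lambda$ is additive.
\item The diagonal sequence $c_n=\sup_{k\le n}a_{k,n}$ shows that $\lambda$ is countably additive.
\item Clearly $0\le\lambda\le\mu^\ast$; in fact $\lambda$ is exactly the countably additive part $\mu_c$ of your Yosida--Hewitt decomposition.
\end{itemize}
For strict positivity, suppose $\lambda(c)=0$ for some $c\neq[{\bf 0}]$. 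For each $k$, pick a sequence $c_{k,n}\uparrow_n c$ with $\mu^\ast(c_{k,n})\le 2^{-k}$ for all $n$. Weak countable distributivity, applied below $c$, yields $f$ with $d=\inf_k c_{k,f(k)}\neq[{\bf 0}]$. Then $\mu^\ast(d)\le 2^{-k}$ for every $k$, so $\mu^\ast(d)=0$, contradicting the strict positivity of $\mu^\ast$. Hence $\lambda$ is strictly positive, and ${\bf p}=\lambda/\lambda([{\bf 1}])$ is the required measure. The ccc plays no role in this step.
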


More precisely, an almost measurable Lindenbaum--Tarski algebra $\mathfrak{A}$ carries a strictly positive, countably additive probability measure ${\bf p}$ if and only if it carries a strictly positive, finitely additive probability measure ${\bf q}$ if and only if $\mathfrak{A}\setminus\{[{\bf 0}]\}$ is the union of countably many non-empty sets of equivalence classes of formulas of $\mathcal{L}$, each of which has a positive intersection number \cite{Kelley59}.

An almost measurable Lindenbaum--Tarski algebra $\mathfrak{A}$ of the language $\mathcal{L}$ of classical sentential calculi which satisfies Theorem 2.3 is called {\it measurable}. Every measurable Lindenbaum--Tarski algebra is almost measurable, but not conversely. Measurable Lindenbaum--Tarski algebras are denoted by $\mathfrak{A}^\ast$.

An ordered system $(\mathfrak{A}^\ast,{\bf p})$, where $\mathfrak{A}^\ast$ is a measurable Lindenbaum--Tarski algebra and ${\bf p}$ is a strictly positive, countably additive probability measure on $\mathfrak{A}^\ast$, is called a {\it Lindenbaum--Tarski probability algebra of the language $\mathcal{L}$ of classical sentential calculi}.

\begin{theorem}[Stone \cite{Stone37a}]
{\it Let $\mathfrak{A}^\ast$ be a measurable Lindenbaum--Tarski algebra of the language $\mathcal{L}$ of classical sentential calculi, $Z^\ast$ be a set of maximal filters of $\mathfrak{A}^\ast$, and $\gamma$ be a Boolean homomorphism of $\mathfrak{A}^\ast$ into $\mathcal{P}(Z^\ast)$ such that, for every equivalence class $[F]$ of formulas of $\mathcal{L}$, $\gamma([F])=\{\mathbf{F}\in Z^\ast:[F]\in\mathbf{F}\}$. Then the set $Z^\ast$ equipped with the topology for which the $\gamma([F])$'s form an open base is a zero-dimensional compact Hausdorff space.}
\end{theorem}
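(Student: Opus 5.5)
The plan is the classical Stone argument. I read $Z^\ast$ as the set of \emph{all} maximal filters of $\mathfrak{A}^\ast$; this is essential for compactness. Measurability of $\mathfrak{A}^\ast$ and the measure ${\bf p}$ play no role: only the Boolean structure of $\mathfrak{A}^\ast$ is used.

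First I would verify that $\gamma$ is a Boolean homomorphism. The identities
\begin{gather*}
\gamma([F]\wedge[G])=\gamma([F])\cap\gamma([G]),\qquad \gamma(\neg[F])=Z^\ast\setminus\gamma([F])
\end{gather*}
follow from two facts. Filters are closed under finite meets and upward closed. A maximal filter $\mathbf{F}$ contains exactly one of $[F]$ and $\neg[F]$. To see the latter, suppose $[F]\notin\mathbf{F}$. Then the filter generated by $\mathbf{F}\cup\{[F]\}$ is improper, so $[G]\wedge[F]=[{\bf 0}]$ for some $[G]\in\mathbf{F}$. Hence $[G]\le\neg[F]$, and therefore $\neg[F]\in\mathbf{F}$. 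Also $\gamma([{\bf 1}])=Z^\ast$.

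Consequently the family $\{\gamma([F])\}$ covers $Z^\ast$ and is closed under finite intersections, so it is a legitimate base for a topology. Each basic set is open, and its complement $\gamma(\neg[F])$ is also open, so every basic set is clopen. This gives zero-dimensionality. For the Hausdorff property, take distinct $\mathbf{F}\neq\mathbf{G}$ in $Z^\ast$. Pick $[F]\in\mathbf{F}\setminus\mathbf{G}$; then $\neg[F]\in\mathbf{G}$. So $\gamma([F])$ and $\gamma(\neg[F])$ are disjoint open neighbourhoods of $\mathbf{F}$ and $\mathbf{G}$ respectively.

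The main step is compactness. It suffices to treat covers by basic open sets. Suppose $\{\gamma([F_i])\}_{i\in I}$ covers $Z^\ast$ but has no finite subcover. Then for every finite $J\subseteq I$,
\begin{gather*}
\gamma\bigl(\textstyle\bigwedge_{i\in J}\neg[F_i]\bigr)=Z^\ast\setminus\bigcup_{i\in J}\gamma([F_i])\neq\emptyset .
\end{gather*}
It follows that $\bigwedge_{i\in J}\neg[F_i]\neq[{\bf 0}]$, since $\gamma([{\bf 0}])=\emptyset$. Thus the family $\{\neg[F_i]\}_{i\in I}$ has the finite intersection property and generates a proper filter. By Zorn's lemma (the ultrafilter lemma) this filter extends to a maximal filter $\mathbf{F}\in Z^\ast$. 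This $\mathbf{F}$ contains every $\neg[F_i]$, hence no $[F_i]$, so it lies outside every $\gamma([F_i])$. This contradicts the covering assumption.

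The only genuinely non-constructive ingredient is this appeal to the ultrafilter lemma. It is also where the hypothesis that $Z^\ast$ contains all maximal filters is indispensable. The remaining verifications are routine.
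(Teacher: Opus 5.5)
Your proof is correct and is the classical Stone argument: maximal filters contain exactly one of $[F]$ and $\neg[F]$, the basic sets $\gamma([F])$ are clopen and separate points, and compactness follows from the finite intersection property together with the ultrafilter lemma. The paper gives no proof of its own and simply cites Stone for this argument; your reading of $Z^\ast$ as the set of \emph{all} maximal filters is the right one, since for an arbitrary set of maximal filters the space is still zero-dimensional and Hausdorff but need not be compact.
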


The zero-dimensional compact Hausdorff space $Z^\ast$ of Theorem 2.4 is called the {\it Stone space of $\mathfrak{A}^\ast$}. Of course, since every measurable Lindenbaum--Tarski algebra is almost measurable, that is to say, ccc and weakly countably distributive \cite{Cosyns27}, $Z^\ast$ is an extremally disconnected compact Hausdorff space in which meagre sets are nowhere dense \cite{Kelley59,Stone37b}. An extremally disconnected compact Hausdorff space is also called a Stonean space.

\begin{theorem}[Fremlin \cite{Fremlin89}]
{\it Every Lindenbaum--Tarski probability algebra $(\mathfrak{A}^\ast,{\bf p})$ of the language $\mathcal{L}$ of classical sentential calculi is isomorphic to the probability algebra $(\mathcal{B}a(Z^\ast)/\mathcal{N},{\bf P})$ of the probability space $(Z^\ast,\mathcal{B}a(Z^\ast),\mathbb{P})$, where $Z^\ast$ is the Stone space of $\mathfrak{A}^\ast$, $\mathcal{B}a(Z^\ast)$ is the $\sigma$-field of Baire subsets of $Z^\ast$, and $\mathbb{P}$ is a strictly positive, countably additive probability measure on $\mathcal{B}a(Z^\ast)$.}
\end{theorem}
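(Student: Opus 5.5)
The plan is to transport ${\bf p}$ to the Stone space via the Stone representation and then extend it to the Baire $\sigma$-field. Let $\mathcal{C}$ be the field of open-closed subsets of $Z^\ast$. By Theorem 2.4, $\gamma$ is a Boolean isomorphism of $\mathfrak{A}^\ast$ onto $\mathcal{C}$. It is injective because distinct classes are separated by a maximal filter, and it is surjective because compactness makes every open-closed set a finite union of basic sets $\gamma([F])$. Define $\mu(\gamma([F]))={\bf p}([F])$ on $\mathcal{C}$; then $\mu$ is a finitely additive probability on $\mathcal{C}$.

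The first step is to show that $\mu$ is countably additive on $\mathcal{C}$. This holds trivially by compactness. If $\gamma([F])=\bigcup_n\gamma([F_n])$ with the $\gamma([F_n])$ pairwise disjoint and open-closed, then only finitely many are non-empty. Carathéodory's extension theorem therefore gives a unique countably additive $\mathbb{P}$ on $\sigma(\mathcal{C})$. Since $Z^\ast$ is zero-dimensional compact Hausdorff, every compact $G_\delta$ is a countable intersection of open-closed sets, so $\sigma(\mathcal{C})=\mathcal{B}a(Z^\ast)$.

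The second, and main, step is to identify $\mathcal{B}a(Z^\ast)/\mathcal{N}$ with $\mathcal{C}$ and show that $\mathbb{P}$ is strictly positive modulo $\mathcal{N}$. Since $Z^\ast$ is Stonean and, by ccc and weak countable distributivity, meagre sets in it are nowhere dense, the Loomis--Sikorski/Baire property argument shows that every Baire set $B$ differs from a unique open-closed set $B^\ast$ by a meagre, hence nowhere dense, set. Taking $B^\ast$ to be the closure of the regular-open interior part works, because closures of open sets are open in an extremally disconnected space. So $B\mapsto B^\ast$ induces a $\sigma$-isomorphism $\mathcal{B}a(Z^\ast)/\mathcal{N}\cong\mathcal{C}\cong\mathfrak{A}^\ast$.

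The obstacle is showing $\mathbb{P}(B)=\mu(B^\ast)$, that is, that nowhere dense Baire sets are $\mathbb{P}$-null. I would argue as follows. Let $N$ be a nowhere dense Baire set; its closure $\bar N$ is closed nowhere dense. By the outer regularity of the Carathéodory extension,
\begin{gather*}
\mathbb{P}(N)=\inf\{\textstyle\sum_n\mu(C_n): N\subseteq\bigcup_n C_n,\ C_n\in\mathcal{C}\}.
\end{gather*}
A compact $G_\delta$ set $K$ that is nowhere dense is a decreasing intersection of open-closed sets $C_n=\gamma([F_n])$. The classes $[F_n]$ then decrease to $[{\bf 0}]$ in $\mathfrak{A}^\ast$: their infimum is the interior of the closure of the interior of $K$, which is empty. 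The $\sigma$-continuity of ${\bf p}$, that is, its countable additivity on $\mathfrak{A}^\ast$ (where infima are the algebra's, not set intersections), then gives $\mu(C_n)\to0$. Hence such $K$ are null. The class of Baire sets $B$ with $\mathbb{P}(B\triangle B^\ast)=0$ is then checked to be a $\sigma$-field containing $\mathcal{C}$, using that countable unions and intersections in $\mathcal{C}$ computed in $\mathfrak{A}^\ast$ differ from set-theoretic ones by meagre $F_\sigma$ sets built from such $K$. This closure argument is where I expect the real work to lie.

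Once this is done, ${\bf P}([B])=\mathbb{P}(B)=\mu(B^\ast)={\bf p}(\gamma^{-1}(B^\ast))$. This is well defined, countably additive, and strictly positive because ${\bf p}$ is, and $\mathbb{P}$ itself is strictly positive on non-empty open-closed sets. This yields the isomorphism $(\mathfrak{A}^\ast,{\bf p})\cong(\mathcal{B}a(Z^\ast)/\mathcal{N},{\bf P})$ and, in passing, shows that $\mathcal{N}$ coincides with the $\mathbb{P}$-null ideal.
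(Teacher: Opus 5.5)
Your argument is correct, but it takes a different route from the paper. The paper does not prove Theorem 2.5 itself; it cites Fremlin. The paragraph after the theorem, however, shows the intended construction: take the Boolean $\sigma$-epimorphism $\eta\colon\mathcal{B}a(Z^\ast)\to\mathfrak{A}^\ast$ with $\ker\eta=\mathcal{N}$ and \emph{define} $\mathbb{P}={\bf p}\circ\eta$. From there:
\begin{itemize}
\item countable additivity of $\mathbb{P}$ is inherited from ${\bf p}$, because $\eta$ preserves countable joins;
\item $\mathbb{P}(B)=0$ if and only if $B\in\mathcal{N}$ follows from the strict positivity of ${\bf p}$;
\item $\eta$ immediately induces the measure-preserving isomorphism $\mathcal{B}a(Z^\ast)/\mathcal{N}\cong\mathfrak{A}^\ast$.
\end{itemize}

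You instead build $\mathbb{P}$ by Carath\'eodory extension of the clopen measure $\mu$. You then have to show separately that this extension annihilates nowhere dense Baire sets. Your proof of that is sound: continuity of ${\bf p}$ along $[F_n]\downarrow[{\bf 0}]$ makes nowhere dense compact $G_\delta$ sets null, and the good-sets argument then goes through. The key point there is that $\mathrm{cl}\bigcup_n C_n\setminus\bigcup_n C_n$ is itself a nowhere dense compact $G_\delta$.

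The step you flag as the main obstacle can be bypassed entirely. Your second step already constructs $B\mapsto B^\ast$ as a $\sigma$-homomorphism, so ${\bf p}\circ\gamma^{-1}\circ(\,\cdot\,)^\ast$ is a countably additive probability on $\mathcal{B}a(Z^\ast)=\sigma(\mathcal{C})$ that agrees with $\mu$ on $\mathcal{C}$. By the uniqueness clause of Carath\'eodory's theorem it equals $\mathbb{P}$, and no closure argument is needed. This shortcut is in substance the paper's construction.

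Your longer route does buy two things. It identifies $\mathbb{P}$ as the unique Baire measure extending the clopen measure. It also shows exactly where countable, as opposed to finite, additivity of ${\bf p}$ is used: in making nowhere dense compact $G_\delta$ sets null. Countable additivity of $\mu$ on $\mathcal{C}$ comes for free from compactness, so a merely finitely additive strictly positive measure would still extend, but it would charge such a set.
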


The strict positivity of $\mathbb{P}$ in Theorem 2.5 follows from the strict positivity of ${\bf p}$ in the Lindenbaum--Tarski probability algebra $(\mathfrak{A}^\ast,{\bf p})$. Note that the $\sigma$-ideal $\mathcal{N}$ of nowhere dense subsets of $Z^\ast$ is exactly that of negligible subsets of $Z^\ast$. This follows from the existence of a Boolean $\sigma$-epimorphism of $\mathcal{B}a(Z^\ast)$ into $\mathfrak{A}^\ast$\linebreak whose kernel is the $\sigma$-ideal of nowhere dense sets.

In fact, let $\eta$ be a Boolean $\sigma$-epimorphism of $\mathcal{B}a(Z^\ast)$ into $\mathfrak{A^\ast}$ such that $\ker\,\eta=\{B\in\mathcal{B}a(Z^\ast):\eta(B)=[{\bf 0}]\}=\mathcal{N}$, and set $\mathbb{P}(B)={\bf p}(\eta(B))$ for every $B\in\mathcal{B}a(Z^\ast)$. Then $\mathbb{P}(B)=0$ if and only if ${\bf p}(\eta(B))=0$ if and only if $\eta(B)=[{\bf 0}]$ (because ${\bf p}$ is strictly positive) if and only if $B\in\mathcal{N}$, whence every Baire subset of $Z^\ast$ is negligible if and only if it is nowhere dense as claimed.

Theorem 2.5 can be regarded as the preliminary version of a Stone-like representation theorem for Lindenbaum--Tarski probability algebras $(\mathfrak{A^\ast},{\bf p})$. The interaction between the Stone topology $\mathcal{T}_S^\ast$ on $Z^\ast$ and the probability measure $\mathbb{P}$ on $\mathcal{B}a(Z^\ast)$, however, is not entirely clear. In particular, it is not clear in what sense $\mathcal{T}_S^\ast$ and $\mathbb{P}$ are compatible with each other.

\begin{theorem}[Fremlin \cite{Fremlin89}]
{\it Let $Z^\ast$ be the Stone space of a measurable Lindenbaum--Tarski algebra of the language $\mathcal{L}$ of classical sentential calculi, $\mathcal{T}_S^\ast$ be the Stone topology on $Z^\ast$, $\mathcal{B}a(Z^\ast)$ be the $\sigma$-field of Baire subsets of $Z^\ast$, and $\mathbb{P}$ be a strictly positive, countably additive probability measure on $\mathcal{B}a(Z^\ast)$. Then the ordered system $(Z^\ast,\mathcal{T}_S^\ast,\mathcal{B}a(Z^\ast),\mathbb{P})$ is a Radon probability space.}
\end{theorem}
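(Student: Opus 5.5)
Recall what has to be checked. A Radon probability space is a compact Hausdorff topological space together with a probability measure that is inner regular with respect to compact sets. Here the measure lives on the Baire $\sigma$-field, so the natural reading is: every Baire set $B$ satisfies
\begin{gather*}
\mathbb{P}(B)=\sup\{\mathbb{P}(K):K\subseteq B,\ K \text{ compact } G_\delta\}.
\end{gather*}
Equivalently, $\mathbb{P}$ extends uniquely to a Radon measure on the Borel sets. By Theorem 2.4, $Z^\ast$ is a compact Hausdorff space, indeed a Stonean space.

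I plan to exploit the fact that the open-closed sets $\gamma([F])$ form a base of clopen sets, together with the structure described after Theorem 2.5: every Baire set differs from a clopen set by a nowhere dense (equivalently negligible) set. The steps are as follows.

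First, every Baire set $B$ has the form $B=\gamma([F])\,\triangle\,N$ with $N\in\mathcal{N}$. I will prove this by showing that the family of such sets contains the clopen sets and is closed under complements and countable unions. For countable unions, use the $\sigma$-completeness of $\mathfrak{A}^\ast$: the supremum corresponds to the clopen closure of the union of the open sets, and the difference is a meagre set, hence nowhere dense, since meagre sets are nowhere dense in $Z^\ast$. This is essentially the Loomis--Sikorski/Baire-property argument behind $\eta$. It gives $\mathbb{P}(B)=\mathbb{P}(\gamma([F]))$.

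Second, I will establish regularity by approximating from inside by closed Baire sets. Since $Z^\ast$ is compact, every closed Baire set is compact. Let $\mathcal{R}$ be the class of Baire sets $B$ such that, for every $\varepsilon>0$, there exist a closed Baire set $K\subseteq B$ and an open Baire set $U\supseteq B$ with $\mathbb{P}(U\setminus K)<\varepsilon$. Clopen sets lie in $\mathcal{R}$ trivially. The class $\mathcal{R}$ is closed under complements by symmetry. It is closed under countable unions by the standard argument: take finitely many $K_i$, whose finite union is closed, and the union of the $U_i$, which is open Baire being a countable union. Since the clopen sets generate $\mathcal{B}a(Z^\ast)$ (Baire sets are generated by zero sets, and each zero set is a countable intersection of clopen sets in a zero-dimensional compact space), we get $\mathcal{R}=\mathcal{B}a(Z^\ast)$. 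This already yields inner regularity by compact $G_\delta$ sets, which is the Baire-measure version of the Radon property.

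Third, to obtain a genuine Radon measure, I will extend $\mathbb{P}$ to the Borel sets by the inner formula
\begin{gather*}
\mu(E)=\sup\{\mathbb{P}(K):K\subseteq E,\ K \text{ compact Baire}\}
\end{gather*}
on the completion. Alternatively, I will invoke the Riesz representation theorem for the positive functional $f\mapsto\int f\,d\mathbb{P}$ on $C(Z^\ast)$, which gives a unique Radon measure agreeing with $\mathbb{P}$ on Baire sets. The main obstacle is the claim that every Baire set is open modulo nowhere dense, and in particular controlling countable unions, where the Stonean (extremally disconnected) property and the fact that meagre sets are nowhere dense are essential. Strict positivity of $\mathbb{P}$ is used only to identify $\mathcal{N}$ with the null sets, which is not needed for regularity itself. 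Hence the argument reduces to the standard regularity of Baire measures on compact spaces, and I expect this step to go through routinely.
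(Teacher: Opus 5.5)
Your Step 2 is a correct, standard proof that $\mathbb{P}$ is inner regular with respect to compact Baire sets on the $\sigma$-field generated by the open-closed sets. It proves what the paper states in one line for its clause (iv). The gap is that the paper's notion of a Radon probability space, listed right after the statement, has two further clauses: (ii) every open subset of $Z^\ast$ lies in $\mathcal{B}a(Z^\ast)$, and (iii) $(Z^\ast,\mathcal{B}a(Z^\ast),\mathbb{P})$ is complete. Your proposal never proves these for the quadruple in the statement. Step 3 does not supply them either: passing to the Borel sets via Riesz shows only that $\mathbb{P}$ \emph{extends} to a Radon measure, which is a claim about a different measure space.

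Moreover, (ii) and (iii) cannot be proved for the $\sigma$-field your argument uses. Suppose $\mathfrak{A}^\ast$ is atomless. Then $Z^\ast$ has no isolated points and, being extremally disconnected, no non-trivial convergent sequences, so no point $z$ is a $G_\delta$. Closed Baire sets in a compact space are $G_\delta$, so $\{z\}$ is not Baire, and $Z^\ast\setminus\{z\}$ is an open set outside the $\sigma$-field. Now take $z$ in the non-empty negligible zero set $\bigcap_n\gamma([F_n])$, where the $[F_n]$ decrease with ${\bf p}([F_n])\to 0$. Then $\{z\}$ is a non-measurable subset of a negligible set.

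The paper's justifications of (ii) and (iii) use two facts: an open $G$ differs from the open-closed set ${\rm cl}\,G$ by its nowhere dense boundary, and negligible sets are the nowhere dense sets, which form a $\sigma$-ideal. These arguments only work if $\mathcal{B}a(Z^\ast)$ is read as $\{C\triangle N: C~\text{open-closed},~N~\text{nowhere dense}\}$, equivalently the sets with the Baire property. That is the $\sigma$-field Fremlin uses for the Stone space of a measure algebra. Under this reading your Step 1 is exactly the argument showing this family is a $\sigma$-field, and (ii) and (iii) follow at once. As written, though, Steps 2 and 3 never use Step 1, even though you call it the main obstacle.

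But under this reading Step 2 no longer suffices, because the $\sigma$-field is not generated by the open-closed sets, so (iv) needs another argument. For $B=C\triangle N$, use ccc to take a countable maximal disjoint family of non-empty open-closed sets $D_n\subseteq C\setminus{\rm cl}\,N$. Their union is dense in $C$, so $\bigvee_n D_n=C$ in $\mathfrak{A}^\ast$. Countable additivity of ${\bf p}$ then gives $\mathbb{P}(\bigcup_{n\leq m}D_n)\to\mathbb{P}(B)$.

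This step, like the equality $\mathbb{P}(B)=\mathbb{P}(\gamma([F]))$ in your Step 1, needs $\mathbb{P}={\bf p}\circ\eta$ to be normal. Strict positivity does not give this, so your remark that strict positivity is what identifies $\mathcal{N}$ with the null sets is not right. For a counterexample, replace $\mathbb{P}$ by $\tfrac12(\mathbb{P}+\delta_z)$ with $z$ in a non-empty nowhere dense zero set $K$. This keeps strict positivity and countable additivity, but gives the nowhere dense Baire set $K$ measure at least $\tfrac12$.
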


The topological probability space $(Z^\ast,\mathcal{T}_S^\ast,\mathcal{B}a(Z^\ast),\mathbb{P})$ of Theorem 2.6 is a Radon probability space in the following sense:\\
(i) $Z^\ast$ is Hausdorff (distinct points are separated by disjoint open sets);\\
(ii) $\mathcal{T}_S^\ast\subseteq\mathcal{B}a(Z^\ast)$ (every open set is measurable);\\
(iii) $(Z^\ast,\mathcal{B}a(Z^\ast),\mathbb{P})$ is complete (every negligible set is measurable);\\
(iv) $\mathbb{P}(B)=\sup\,\{\mathbb{P}(K):K\subseteq B,~\text{$K$ compact}\}$ for every $B\in\mathcal{B}a(Z^\ast)$ ($\mathbb{P}$ is inner regular with respect to the compact sets).

That $Z^\ast$ is Hausdorff is obvious. And, since every open set in the Stone topology $\mathcal{T}_S^\ast$ symmetrically differs from an open-closed set by its boundary, which is nowhere dense, open sets in $\mathcal{T}_S^\ast$ are certainly measurable. Further, since every Baire subset of $Z^\ast$ is negligible if and only if it is nowhere dense and nowhere dense sets form a $\sigma$-ideal in $Z^\ast$, $(Z^\ast,\mathcal{B}a(Z^\ast),\mathbb{P})$ is complete. Finally,\linebreak $\mathbb{P}$ is inner regular with respect to the compact sets because every Baire subset of $Z^\ast$ is approximable from below by compact sets.

\begin{definition}[Dixmier \cite{Dixmier51}]
Let $(Z^\ast,\mathcal{T}_S^\ast,\mathcal{B}a(Z^\ast),\mathbb{P})$ be the Radon probability space of a Lindenbaum--Tarski probability algebra $(\mathfrak{A^\ast},{\bf p})$ of the language $\mathcal{L}$ of classical sentential calculi. A probability measure on $\mathcal{B}a(Z^\ast)$ is called {\it normal} if every nowhere dense Baire subset of $Z^\ast$ is negligible.

A Stonean space in which the union of the supports of normal probability measures is dense is said to be {\it hyperstonean}.
\end{definition}

\begin{theorem}
{\it Let $(Z^\ast,\mathcal{T}_S^\ast,\mathcal{B}a(Z^\ast),\mathbb{P})$ be the Radon probability space of a Lindenbaum--Tarski probability algebra $(\mathfrak{A^\ast},{\bf p})$ of the language $\mathcal{L}$ of classical sentential calculi. Then $Z^\ast$ is a hyperstonean space.}
\end{theorem}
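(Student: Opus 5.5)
We have to show two things: that $Z^\ast$ is Stonean, and that the union of the supports of normal probability measures on $\mathcal{B}a(Z^\ast)$ is dense in $Z^\ast$. The first is already recorded after Theorem 2.4. Because $\mathfrak{A}^\ast$ is almost measurable, it is ccc and weakly countably distributive, so $Z^\ast$ is an extremally disconnected compact Hausdorff space in which meagre sets are nowhere dense. That leaves density. The plan is to exhibit one normal probability measure whose support is all of $Z^\ast$. The natural candidate is the measure $\mathbb{P}$ of Theorems 2.5 and 2.6.

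The first step is normality of $\mathbb{P}$. The discussion after Theorem 2.5 already gives this. With the Boolean $\sigma$-epimorphism $\eta$ of $\mathcal{B}a(Z^\ast)$ onto $\mathfrak{A}^\ast$ whose kernel is $\mathcal{N}$, and with $\mathbb{P}(B)={\bf p}(\eta(B))$, a Baire set $B$ satisfies $\mathbb{P}(B)=0$ if and only if $B\in\mathcal{N}$. In particular, every nowhere dense Baire subset of $Z^\ast$ is negligible, so $\mathbb{P}$ is normal in the sense of Definition 2.7. Countable additivity and normedness come from Theorem 2.5.

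The second step is to identify the support of $\mathbb{P}$. I take the support to be the complement of the largest open negligible set. One has to check that this largest open negligible set exists. This uses Theorem 2.6: open sets are measurable, and $\mathbb{P}$ is Radon, hence $\tau$-additive on open sets. Alternatively, one can argue directly: any open set is the union of basic open-closed sets $\gamma([F])$, so an open set is negligible exactly when every basic $\gamma([F])$ inside it is negligible.

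So suppose $U$ is a non-empty open subset of $Z^\ast$. Then $U$ contains a non-empty basic set $\gamma([F])$ with $[F]\neq[{\bf 0}]$. This set is open and non-empty, hence not nowhere dense, so by the first step $\mathbb{P}(\gamma([F]))>0$. Equivalently, $\mathbb{P}(\gamma([F]))={\bf p}([F])>0$ by strict positivity of ${\bf p}$. Hence $\mathbb{P}(U)>0$. The only open negligible set is therefore $\emptyset$, and the support of $\mathbb{P}$ is $Z^\ast$. A fortiori the union of the supports of all normal measures is dense, and $Z^\ast$ is hyperstonean.

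The main obstacle is bookkeeping rather than substance. One must make sure the support is well defined for a Baire measure on a possibly non-metrizable compact space, and that the identification $\eta(\gamma([F]))=[F]$ holds, so that $\mathbb{P}(\gamma([F]))={\bf p}([F])$. If this identification is not available directly, I would argue only through the equivalence $\mathbb{P}(B)=0\iff B\in\mathcal{N}$, applied to the open-closed Baire set $\gamma([F])$. That suffices, since a non-empty open set is never nowhere dense.
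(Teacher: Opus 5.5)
Your proposal is correct and follows essentially the same argument as the paper: $\mathbb{P}$ itself is normal because a Baire set is negligible exactly when it is nowhere dense, and its support is all of $Z^\ast$ because no non-empty open set can be negligible. Your step through a non-empty basic open-closed set $\gamma([F])\subseteq U$ is a small refinement, since it applies that equivalence only to a set that is certainly Baire, whereas the paper applies it directly to arbitrary open sets.
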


\begin{proof}
Let $\mathbb{P}$ be a probability measure on $\mathcal{B}a(Z^\ast)$ with support
\begin{gather*}
\textstyle{{\rm supp}(\mathbb{P})=Z^\ast\setminus\bigcup\,\{G\subseteq Z^\ast,~\text{$G$ open}:\mathbb{P}(G)=0\}.}
\end{gather*}
Since every Baire subset of $Z^\ast$ is negligible if and only if it is nowhere dense, $\mathbb{P}$ is normal and ${\rm supp}(\mathbb{P})$ is dense in $Z^\ast$ because nowhere dense open sets are empty and the complement of a generalised union of empty subsets of $Z^\ast$ is $Z^\ast$ itself, which is certainly dense. It follows that the union of the supports of normal probability measures is dense in $Z^\ast$ because every superset of a dense set is dense, whence $Z^\ast$ is hyperstonean as claimed.
\end{proof}

Every hyperstonean space $Z^\ast$ is a Stonean space in which meagre sets are nowhere dense, but not conversely. In fact, let $Z$ be the Stone space of an almost measurable Lindenbaum--Tarski algebra $\mathfrak{A}$. Then there exists a unique disjoint decomposition $Z=Z^\ast\sqcup Z^{\ast\ast}$ such that\\
(i) $Z^\ast$ is hyperstonean: Every meagre set is nowhere dense, and every normal probability measure has dense support;\\
(ii) $Z^{\ast\ast}$ is not hyperstonean: Every meagre set is nowhere dense, but every probability measure has nowhere dense support (whence no probability measure is normal but that which vanishes identically).

For an example of a Stonean space $Z^{\ast\ast}$, see Dixmier \cite{Dixmier51}.

\begin{definition}
Let $(\mathfrak{A}^\ast,{\bf p})$ be a Lindenbaum--Tarski probability algebra of the language $\mathcal{L}$ of classical sentential calculi. The Radon probability space $(Z^\ast,\mathcal{T}_S^\ast,\mathcal{B}a(Z^\ast),\mathbb{P})$ is called the {\it Stone probability space of $(\mathfrak{A}^\ast,{\bf p})$}.
\end{definition}

\begin{theorem}
{\it Every Lindenbaum--Tarski probability algebra $(\mathfrak{A}^\ast,{\bf p})$ of the language $\mathcal{L}$ of classical sentential calculi is isomorphic to the probability algebra $(\mathcal{B}a(Z^\ast)/\mathcal{N},{\bf P})$ of its Stone probability space $(Z^\ast,\mathcal{T}_S^\ast,\mathcal{B}a(Z^\ast),\mathbb{P})$.}
\end{theorem}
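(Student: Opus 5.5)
The plan is to obtain this statement as a repackaging of Theorem 2.5, using Theorem 2.6 and Definition 2.9 to identify the spaces involved. Theorem 2.5 already yields an isomorphism between $(\mathfrak{A}^\ast,{\bf p})$ and the probability algebra $(\mathcal{B}a(Z^\ast)/\mathcal{N},{\bf P})$ of $(Z^\ast,\mathcal{B}a(Z^\ast),\mathbb{P})$. By Theorem 2.6 this probability space, with the Stone topology $\mathcal{T}_S^\ast$ added, is a Radon probability space, and by Definition 2.9 that Radon space is precisely the Stone probability space of $(\mathfrak{A}^\ast,{\bf p})$. So the work lies in showing that the measure $\mathbb{P}$ produced in Theorem 2.5 can be taken to be the one appearing in the Stone probability space, and that the quotient does not depend on the topology.

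First, I will fix the measure concretely, as in the discussion following Theorem 2.5. Take the Boolean $\sigma$-epimorphism $\eta:\mathcal{B}a(Z^\ast)\to\mathfrak{A}^\ast$ with $\ker\eta=\mathcal{N}$. Concretely, $\eta(B)$ is the unique $[F]$ such that $B\,\triangle\,\gamma([F])$ is nowhere dense (meagre equals nowhere dense in the Stonean space $Z^\ast$). Then define $\mathbb{P}=\mathbf{p}\circ\eta$.

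Second, I will check that $\mathbb{P}$ is countably additive on $\mathcal{B}a(Z^\ast)$ and strictly positive in the sense that only nowhere dense Baire sets are null. Countable additivity follows because $\eta$ is a $\sigma$-homomorphism and ${\bf p}$ is countably additive. Strict positivity follows because $\mathbb{P}(B)=0$ iff $\eta(B)=[{\bf 0}]$ iff $B\in\mathcal{N}$. Hence the $\sigma$-ideal of $\mathbb{P}$-negligible sets is exactly $\mathcal{N}$, so the measure algebra of the Stone probability space is $\mathcal{B}a(Z^\ast)/\mathcal{N}$, with ${\bf P}([B])=\mathbb{P}(B)$ well defined.

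Third, I will pass to the quotient. By the first isomorphism theorem for $\sigma$-algebras, $\eta$ induces a Boolean $\sigma$-isomorphism $\tilde\eta:\mathcal{B}a(Z^\ast)/\mathcal{N}\to\mathfrak{A}^\ast$. By construction ${\bf p}\circ\tilde\eta={\bf P}$, so $\tilde\eta^{-1}$ is a measure-preserving isomorphism of $(\mathfrak{A}^\ast,{\bf p})$ onto $(\mathcal{B}a(Z^\ast)/\mathcal{N},{\bf P})$. On generators, $\tilde\eta^{-1}([F])=[\gamma([F])]$.

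The main obstacle is the existence and surjectivity of $\eta$, that is, that every Baire set differs from some clopen $\gamma([F])$ by a nowhere dense set. I would prove this by showing that the family of Baire sets with this property contains the clopen sets and is a $\sigma$-algebra. The key point is closure under countable unions: given $B_n\sim\gamma([F_n])$, I will use the $\sigma$-completeness of $\mathfrak{A}^\ast$ to form $[F]=\bigvee[F_n]$. Then $\gamma([F])$ is the closure of $\bigcup\gamma([F_n])$, which exists as a clopen set because $Z^\ast$ is extremally disconnected. It differs from $\bigcup_n B_n$ by a meagre, hence nowhere dense, set. Since the family contains the clopen sets and is a $\sigma$-algebra, it contains all Baire sets, which gives surjectivity of $\eta$.
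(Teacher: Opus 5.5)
Your proposal is correct and matches the paper's approach. The paper gives no separate proof of this theorem and obtains it exactly as you do: it restates Theorem 2.5 with $\mathbb{P}={\bf p}\circ\eta$, relabelled via Theorem 2.6 and Definition 2.9, and the check that $\mathbb{P}(B)=0$ iff $B\in\mathcal{N}$ is the same as your second step in the remarks after Theorem 2.5. Your final good-sets (Loomis--Sikorski) argument for the existence and surjectivity of $\eta$ goes further than the paper, which simply takes $\eta$ from Fremlin; it is sound, given that meagre sets in $Z^\ast$ are nowhere dense and that $\mathcal{B}a(Z^\ast)$ is generated by the clopen sets.
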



\section{Modal probability spaces}

\begin{definition}
A {\it model for the calculus of probability} is an ordered system $(\boldsymbol{\mathfrak{A}},\boldsymbol{Z},\boldsymbol{\mathcal{E}},\zeta)$, where $\boldsymbol{\mathfrak{A}}=(\mathfrak{A}^\ast,{\bf p})$ is a Lindenbaum--Tarski probability algebra of the language $\mathcal{L}$ of classical sentential calculi, $\boldsymbol{Z}=(Z^\ast,\mathcal{T}_S^\ast,\mathcal{B}a(Z^\ast),\mathbb{P})$ is its Stone probability space, $\boldsymbol{\mathcal{E}}=(\mathcal{B}a(Z^\ast)/\mathcal{N},{\bf P})$ is the probability algebra of $\boldsymbol{Z}$, and $\zeta$ is a measure-preserving Boolean $\sigma$-isomorphism of $\boldsymbol{\mathfrak{A}}$ into $\boldsymbol{\mathcal{E}}$ such that, for every equivalence class $[F]$ of formulas of $\mathcal{L}$,
\begin{gather*}
\tag{VIII}
\textstyle{\mathbb{P}(\zeta([F])^\ast)=0~\text{if and only if}~{\rm int}~{\rm cl}~\zeta([F])^\ast=\emptyset,}
\end{gather*}
the probability of the open-closed representative $\zeta([F])^\ast$ of the image $\zeta([F])$ of $[F]$ being $0$ if and only if $\zeta([F])^\ast$ is nowhere dense in $Z^\ast$ (`impossible').
\end{definition}

That Schema (VIII) is, like Schemas (I) and (III) in \cite{Cosyns27}, a theorem of the metalanguage $\mathcal{L}^E$ of mathematical English hardly needs an explanation and follows immediately from the representation of Lindenbaum--Tarski probability algebras, every Baire subset of $Z^\ast$, hence every open-closed subset thereof, being negligible if and only if it is nowhere dense as previously noted.

\begin{theorem}
{\it Let $(\boldsymbol{\mathfrak{A}},\boldsymbol{Z},\boldsymbol{\mathcal{E}},\zeta)$ be a model for the calculus of probability. For every equivalence class $[F]$ of formulas of $\mathcal{L}$,
\begin{gather*}
\tag{IX}
\textstyle{\mathbb{P}(\zeta([F])^\ast)=1~\text{if and only if}~{\rm cl}~{\rm int}~\zeta([F])^\ast=Z^\ast,}
\end{gather*}
the probability of the open-closed representative $\zeta([F])^\ast$ of the image $\zeta([F])$ of $[F]$ being $1$ if and only if $\zeta([F])^\ast$ is non-boundary in $Z^\ast$ $($`necessary'\,$)$.}
\end{theorem}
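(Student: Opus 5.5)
The plan is to derive Schema (IX) from Schema (VIII) by passing to the negation $[\neg F]$, which is again an equivalence class of formulas of $\mathcal{L}$, and then translate between ``nowhere dense'' and ``non-boundary'' by complementation in $Z^\ast$. The argument has three ingredients, carried out in this order: a measure-theoretic reduction, an identification of representatives, and a topological duality.

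First, the measure side. Since $\zeta$ is a measure-preserving Boolean $\sigma$-isomorphism, $\zeta([\neg F])=\zeta([F])'$ in $\mathcal{B}a(Z^\ast)/\mathcal{N}$. Since $\mathbb{P}$ is a probability measure, $\mathbb{P}(\zeta([F])^\ast)=1$ if and only if $\mathbb{P}(Z^\ast\setminus\zeta([F])^\ast)=0$.

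Second, the identification of representatives. I will check that the open-closed representative of the complement is the set-theoretic complement of the representative:
\begin{gather*}
\zeta([\neg F])^\ast=Z^\ast\setminus\zeta([F])^\ast .
\end{gather*}
This holds because the complement of an open-closed set is open-closed. It lies in the class $\zeta([F])'$, because complementation commutes with the quotient map modulo $\mathcal{N}$. By uniqueness of open-closed representatives in a Stonean space in which meagre sets are nowhere dense, it is therefore the representative. This uniqueness comes from the representation in Theorem 2.9 and the facts recalled after Theorem 2.4. With the identity in hand, applying (VIII) to $[\neg F]$ gives
\begin{gather*}
\mathbb{P}(\zeta([F])^\ast)=1 \iff {\rm int}~{\rm cl}\,(Z^\ast\setminus\zeta([F])^\ast)=\emptyset .
\end{gather*}

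Third, the topological duality. I will use the standard identities ${\rm cl}\,(Z^\ast\setminus A)=Z^\ast\setminus{\rm int}\,A$ and ${\rm int}\,(Z^\ast\setminus A)=Z^\ast\setminus{\rm cl}\,A$. Together they give ${\rm int}~{\rm cl}\,(Z^\ast\setminus A)=Z^\ast\setminus{\rm cl}~{\rm int}\,A$. The right-hand side is empty exactly when ${\rm cl}~{\rm int}\,A=Z^\ast$. Taking $A=\zeta([F])^\ast$ yields (IX).

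The main obstacle is the second step. One must make sure that the open-closed representative of the complement really is the complement of the representative, so that (VIII) may legitimately be applied to $[\neg F]$. If one prefers to avoid uniqueness of representatives, there is a fallback. Since $\zeta([F])^\ast$ is open-closed, ${\rm cl}~{\rm int}\,\zeta([F])^\ast=\zeta([F])^\ast$. So (IX) says $\mathbb{P}(\zeta([F])^\ast)=1$ if and only if $\zeta([F])^\ast=Z^\ast$. This can be checked directly from the fact that the open-closed set $Z^\ast\setminus\zeta([F])^\ast$ is negligible if and only if it is nowhere dense, hence if and only if it is empty.
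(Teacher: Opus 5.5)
Your proof is correct and follows essentially the same route as the paper. Both apply (VIII) to $-[F]$, pass to complements using that $\mathbb{P}$ is normed to $1$, and convert ${\rm int}~{\rm cl}$ of the complement into the complement of ${\rm cl}~{\rm int}$. Your explicit check that $\zeta(-[F])^\ast=Z^\ast\setminus\zeta([F])^\ast$ justifies a step the paper uses without comment in its last line; uniqueness of clopen representatives holds because a non-empty clopen set is never nowhere dense.
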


\begin{proof}
Suppose that $\mathbb{P}(\zeta(-[F])^\ast)=0$ if and only if $\zeta(-[F])^\ast$ is nowhere dense in $Z^\ast$ (`impossible'), that is to say, $\mathbb{P}(\zeta(-[F])^\ast)=0$ if and only if ${\rm int}~{\rm cl}~\zeta(-[F])^\ast=\emptyset$. Since $\mathbb{P}$ is normed to $1$, it follows immediately that
\begin{align*}
\textstyle{1-\mathbb{P}(\zeta(-[F])^\ast})&=\textstyle{1~\text{if and only if}~Z^\ast\setminus {\rm int}~{\rm cl}~\zeta(-[F])^\ast=Z^\ast}\\
\textstyle{\Longleftrightarrow\hspace{0.25cm}\mathbb{P}(Z^\ast\setminus\zeta(-[F])^\ast})&=\textstyle{1~\text{if and only if}~{\rm cl}~Z^\ast\setminus {\rm cl}~\zeta(-[F])^\ast=Z^\ast}\\
\textstyle{\Longleftrightarrow\hspace{0.25cm}\mathbb{P}(Z^\ast\setminus\zeta(-[F])^\ast})&=\textstyle{1~\text{if and only if}~{\rm cl}~{\rm int}~Z^\ast\setminus\zeta(-[F])^\ast=Z^\ast}\\
\textstyle{\Longleftrightarrow\hspace{1.45cm}\mathbb{P}(\zeta([F])^\ast})&=\textstyle{1~\text{if and only if}~{\rm cl}~{\rm int}~\zeta([F])^\ast=Z^\ast.}
\end{align*}
Hence, if a sentence is necessary if and only if its negation is impossible (principle of duality common to all classical modal calculi), then $\zeta([F])^\ast$ has probability $1$ if and only if it is non-boundary in $Z^\ast$ (`necessary') as claimed.
\end{proof}

\begin{theorem}
{\it Let $(\boldsymbol{\mathfrak{A}},\boldsymbol{Z},\boldsymbol{\mathcal{E}},\zeta)$ be a model for the calculus of probability. For every equivalence class $[F]$ of formulas of $\mathcal{L}$,
\begin{gather*}
\tag{X}
\textstyle{\mathbb{P}(\zeta([F])^\ast)>0~\text{if and only if}~{\rm int}~{\rm cl}~\zeta([F])^\ast\not=\emptyset,}
\end{gather*}
the probability of the open-closed representative $\zeta([F])^\ast$ of the image $\zeta([F])$ of\pagebreak\linebreak $[F]$ being $>0$ if and only if $\zeta([F])^\ast$ is somewhere dense in $Z^\ast$ $($`possibly true'\,$)$.}
\end{theorem}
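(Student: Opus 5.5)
Schema (X) should follow from Schema (VIII) by taking the contrapositive of both sides of the biconditional. No duality or complementation is needed, unlike in the proof of Theorem 3.2. The plan is to fix an arbitrary equivalence class $[F]$ of formulas of $\mathcal{L}$ and write $E=\zeta([F])^\ast$ for the open-closed representative of its image. By Definition 3.1, Schema (VIII) gives $\mathbb{P}(E)=0$ if and only if ${\rm int}~{\rm cl}~E=\emptyset$. Negating both sides of this biconditional yields $\mathbb{P}(E)\not=0$ if and only if ${\rm int}~{\rm cl}~E\not=\emptyset$.

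The one point needing any care is turning $\mathbb{P}(E)\not=0$ into $\mathbb{P}(E)>0$. This uses that $\mathbb{P}$ is a non-negative measure on $\mathcal{B}a(Z^\ast)$, so $\mathbb{P}(E)\in[0,1]$. We also need $E\in\mathcal{B}a(Z^\ast)$, which holds because $E$ is open-closed and hence Baire (by property (ii) of Theorem 2.6, or directly since clopen sets in a compact zero-dimensional space are Baire). Consequently $\mathbb{P}(E)\not=0$ is equivalent to $\mathbb{P}(E)>0$, and (X) follows.

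If one also wants to justify the gloss `somewhere dense', the interpretation is simply that a set is somewhere dense exactly when it is not nowhere dense, i.e. ${\rm int}~{\rm cl}~E\not=\emptyset$. Since $E$ is clopen, this in fact reduces to $E\not=\emptyset$.

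I expect no real obstacle. The only step deserving an explicit sentence is the non-negativity and measurability remark, which is what lets $\not=0$ be read as $>0$.
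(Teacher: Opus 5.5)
Your proposal is correct and follows the paper's proof essentially verbatim: both negate the two sides of Schema (VIII) and then use the non-negativity of $\mathbb{P}$ to read $\mathbb{P}(\zeta([F])^\ast)\not=0$ as $\mathbb{P}(\zeta([F])^\ast)>0$. Your extra remarks, that the clopen set $\zeta([F])^\ast$ is Baire and that ${\rm int}~{\rm cl}~\zeta([F])^\ast=\zeta([F])^\ast$ for clopen sets, are sound refinements that the paper leaves implicit.
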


\begin{proof}
Recall that $\mathbb{P}(\zeta([F])^\ast)=0$ if and only if ${\rm int}~{\rm cl}~\zeta([F])^\ast=\emptyset$. Then $\mathbb{P}(\zeta([F])^\ast)\not=0$ if and only if ${\rm int}~{\rm cl}~\zeta([F])^\ast\not=\emptyset$, that is to say, $\mathbb{P}(\zeta([F])^\ast)>0$ if and only if ${\rm int}~{\rm cl}~\zeta([F])^\ast\not=\emptyset$ as required because $\mathbb{P}$ is non-negative.
\end{proof}

\begin{theorem}
{\it Let $(\boldsymbol{\mathfrak{A}},\boldsymbol{Z},\boldsymbol{\mathcal{E}},\zeta)$ be a model for the calculus of probability. For every equivalence class $[F]$ of formulas of $\mathcal{L}$,
\begin{gather*}
\tag{XI}
\textstyle{\mathbb{P}(\zeta([F])^\ast)<1~\text{if and only if}~{\rm cl}~{\rm int}~\zeta([F])^\ast\not=Z^\ast,}
\end{gather*}
the probability of the open-closed representative $\zeta([F])^\ast$ of the image $\zeta([F])$ of $[F]$ being $<1$ if and only if $\zeta([F])^\ast$ is somewhere codense in $Z^\ast$ $($`possibly false'\,$)$.}
\end{theorem}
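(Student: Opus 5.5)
The plan is to derive Schema (XI) from Schema (IX) of Theorem 3.2 by contraposition, in the same way that Theorem 3.3 was derived from Schema (VIII). By Theorem 3.2, for every equivalence class $[F]$ of formulas of $\mathcal{L}$, $\mathbb{P}(\zeta([F])^\ast)=1$ if and only if ${\rm cl}~{\rm int}~\zeta([F])^\ast=Z^\ast$. Negating both sides of this biconditional gives $\mathbb{P}(\zeta([F])^\ast)\not=1$ if and only if ${\rm cl}~{\rm int}~\zeta([F])^\ast\not=Z^\ast$.

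The remaining step is to replace $\not=1$ by $<1$. Since $\mathbb{P}$ is a probability measure normed to $1$ and $\zeta([F])^\ast\subseteq Z^\ast$ is an open-closed, hence Baire, set, monotonicity gives $\mathbb{P}(\zeta([F])^\ast)\le\mathbb{P}(Z^\ast)=1$. So $\mathbb{P}(\zeta([F])^\ast)\not=1$ is equivalent to $\mathbb{P}(\zeta([F])^\ast)<1$, which yields (XI).

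For the interpretive clause, I would note that ${\rm cl}~{\rm int}~\zeta([F])^\ast\not=Z^\ast$ says exactly that $Z^\ast\setminus{\rm cl}~{\rm int}~\zeta([F])^\ast={\rm int}~{\rm cl}~(Z^\ast\setminus\zeta([F])^\ast)$ is non-empty. This is the identity already used in the proof of Theorem 3.2. It means that the complement $\zeta(-[F])^\ast$ is somewhere dense, that is, $\zeta([F])^\ast$ is somewhere codense (`possibly false'). Equivalently, one could apply Theorem 3.3 to $-[F]$ and use $\mathbb{P}(\zeta(-[F])^\ast)=1-\mathbb{P}(\zeta([F])^\ast)$. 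I expect no real obstacle here: the only point needing care is the upper bound $\mathbb{P}\le 1$, which converts $\not=1$ into $<1$.
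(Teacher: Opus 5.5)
Your proposal is correct and follows the paper's proof: negate both sides of Schema (IX), then use that $\mathbb{P}$ is normed to $1$ (so $\mathbb{P}(\zeta([F])^\ast)\le 1$) to turn $\not=1$ into $<1$. The paper writes out only these two steps; your explicit monotonicity bound and your use of the identity $Z^\ast\setminus{\rm cl}~{\rm int}~A={\rm int}~{\rm cl}~(Z^\ast\setminus A)$ to justify the `possibly false' reading are correct additions.
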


\begin{proof}
Recall that $\mathbb{P}(\zeta([F])^\ast)=1$ if and only if ${\rm cl}~{\rm int}~\zeta([F])^\ast=Z^\ast$. Then $\mathbb{P}(\zeta([F])^\ast)\not=1$ if and only if ${\rm cl}~{\rm int}~\zeta([F])^\ast\not=Z^\ast$, that is to say, $\mathbb{P}(\zeta([F])^\ast)<1$ if and only if ${\rm cl}~{\rm int}~\zeta([F])^\ast\not=Z^\ast$ as required because $\mathbb{P}$ is normed to $1$.
\end{proof}

Schemas (VIII)--(XI) define the probability of $\zeta([F])^\ast$ in terms of its mode of truth. Since Schemas (III)--(VI) in \cite{Cosyns27} define the mode of truth of $\zeta([F])^\ast$ in terms of sets, $\mathbb{P}(\zeta([F])^\ast)$, as a sentence of the metalanguage $\mathcal{L}^E$ of mathematical English, can always be decomposed into a statement of probability and a statement of modal logic by combining Schemas (VIII)--(XI) and (III)--(VI). For example, combining Schema (VIII) and (III), $\mathbb{P}(\zeta([F])^\ast)=0$ if and only if\linebreak ${\rm int}~{\rm cl}~\zeta([F])^\ast=\emptyset$ if and only if $\zeta([F])^\ast=\emptyset$. Of course, in the ordinary conduct of his affairs, a probabilist will simply say that $\mathbb{P}(\emptyset)=0$.

\begin{definition}
Let $(\boldsymbol{\mathfrak{A}},\boldsymbol{Z},\boldsymbol{\mathcal{E}},\zeta)$ be a model for the calculus of probability. A {\it semantic bridge} is a finite sequence of logical equivalences, formulated in the metalanguage $\mathcal{L}^E$ of mathematical English, combining a statement of probability and a statement of modal logic.

A {\it semantic decomposition} is a finite sequence of semantic bridges.
\end{definition}

\begin{definition}
Let $(\boldsymbol{\mathfrak{A}},\boldsymbol{Z},\boldsymbol{\mathcal{E}},\zeta)$ be a model for the calculus of probability. An {\it event} is an element of the $\sigma$-field $\mathcal{B}a(Z^\ast)$ of Baire subsets of $Z^\ast$.

In particular, for every equivalence class $[F]$ of formulas of the language $\mathcal{L}$ of classical sentential calculi, the open-closed representative $\zeta([F])^\ast$ of the image $\zeta([F])$ of $[F]$ is called the {\it event corresponding to $[F]$}.
\end{definition}

\begin{theorem}
{\it Let $(\boldsymbol{\mathfrak{A}},\boldsymbol{Z},\boldsymbol{\mathcal{E}},\zeta)$ be a model for the calculus of probability. Then there exists a semantic decomposition such that
\begin{gather*}
\textstyle{\mathbb{P}(\zeta([F])^\ast)\geq 0~\text{for every}~\zeta([F])^\ast\in\mathcal{B}a(Z^\ast),}
\end{gather*}
the probability of every event $\zeta([F])^\ast$ corresponding to an equivalence class $[F]$ of formulas of $\mathcal{L}$ being non-negative.}
\end{theorem}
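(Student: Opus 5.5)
The plan is to exhibit an explicit semantic decomposition, that is, a finite sequence of semantic bridges in the sense of Definition 3.5. Each bridge will link a probability statement about $\zeta([F])^\ast$ to a modal statement about it. The decomposition splits according to the mode of truth of the event corresponding to $[F]$: either $\zeta([F])^\ast$ is nowhere dense in $Z^\ast$ (`impossible'), or it is somewhere dense (`possibly true'). Since ${\rm int}~{\rm cl}~\zeta([F])^\ast$ is either empty or non-empty, these two cases are exhaustive. I would first record this dichotomy as a tautology of the metalanguage $\mathcal{L}^E$.

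The first bridge comes from Schema (VIII). It reads
\begin{gather*}
\textstyle{\mathbb{P}(\zeta([F])^\ast)=0~\text{if and only if}~{\rm int}~{\rm cl}~\zeta([F])^\ast=\emptyset~\text{if and only if}~\zeta([F])^\ast=\emptyset,}
\end{gather*}
where the last equivalence is Schema (III) of \cite{Cosyns27}, as already remarked after Theorem 3.4. The second bridge comes from Schema (X), Theorem 3.3, combined with Schema (V) of \cite{Cosyns27}. It reads
\begin{gather*}
\textstyle{\mathbb{P}(\zeta([F])^\ast)>0~\text{if and only if}~{\rm int}~{\rm cl}~\zeta([F])^\ast\not=\emptyset~\text{if and only if}~\zeta([F])^\ast\not=\emptyset.}
\end{gather*}
Because $\zeta([F])^\ast$ is open-closed, it is either empty or non-empty, so exactly one of the right-hand sides holds. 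Hence exactly one of $\mathbb{P}(\zeta([F])^\ast)=0$ and $\mathbb{P}(\zeta([F])^\ast)>0$ holds, and their disjunction is $\mathbb{P}(\zeta([F])^\ast)\geq 0$. Measurability of $\zeta([F])^\ast$ is immediate, since open-closed sets lie in $\mathcal{B}a(Z^\ast)$; this justifies the phrase ``for every $\zeta([F])^\ast\in\mathcal{B}a(Z^\ast)$''.

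The main obstacle is circularity. The proof of Theorem 3.3 already used the non-negativity of $\mathbb{P}$ to turn $\not=0$ into $>0$, so citing Schema (X) as a bridge would presuppose the statement being proved. To avoid this, I would rederive the second bridge directly from the construction of $\mathbb{P}$ in Section 2, namely $\mathbb{P}(B)={\bf p}(\eta(B))$, where ${\bf p}$ is a strictly positive probability measure on $\mathfrak{A}^\ast$. For $[F]\neq[{\bf 0}]$, strict positivity of ${\bf p}$ gives $\mathbb{P}(\zeta([F])^\ast)>0$ outright, and this positive case is where the modal side `somewhere dense' enters. The remaining case is $\zeta([F])^\ast=\emptyset$, where the value is $0$ by Schema (VIII). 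If this rederivation is judged to lean on measure-theoretic non-negativity anyway, the fallback is to present the decomposition honestly: non-negativity on $\mathfrak{A}^\ast$ is taken as given, and the content of the theorem is the modal reading of the two cases $\zeta([F])^\ast=\emptyset$ and $\zeta([F])^\ast\neq\emptyset$.
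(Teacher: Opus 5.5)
Your proposal is essentially the paper's proof: the same two semantic bridges, Schemas (VIII) and (III) for $\mathbb{P}(\zeta([F])^\ast)=0$ and Schemas (X) and (V) for $\mathbb{P}(\zeta([F])^\ast)>0$, combined through the dichotomy $\zeta([F])^\ast=\emptyset$ or $\zeta([F])^\ast\neq\emptyset$. Your circularity remark is accurate, since the proof of Schema (X) invokes non-negativity of $\mathbb{P}$ and your rederivation via strict positivity of ${\bf p}$ still presupposes non-negativity of ${\bf p}$; your fallback reading is exactly the one the paper adopts after Theorem 3.9, where it states that non-negativity holds \emph{a priori} and these theorems only show that the modal logic is consistent with the calculus of probability.
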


\begin{proof}
Clearly, $\mathbb{P}(\zeta([F])^\ast)=0$ if and only if ${\rm int}~{\rm cl}~\zeta([F])^\ast=\emptyset$ if and only if $\zeta([F])^\ast=\emptyset$ by Schemas (VIII) and (III); and $\mathbb{P}(\zeta([F])^\ast)>0$ if and only if ${\rm int}~{\rm cl}~\zeta([F])^\ast\not=\emptyset$ if and only if $\zeta([F])^\ast\not=\emptyset$ by Schemas (X) and (V). Hence, there exist semantic bridges such that $\mathbb{P}(\zeta([F])^\ast)=0$ if and only if $\zeta([F])^\ast=\emptyset$ and $\mathbb{P}(\zeta([F])^\ast)>0$ if and only if $\zeta([F])^\ast\not=\emptyset$, whence there exists a semantic decomposition such that $\mathbb{P}(\zeta([F])^\ast)\geq 0$ for every $\zeta([F])^\ast\in\mathcal{B}a(Z^\ast)$.
\end{proof}

\begin{theorem}
{\it Let $(\boldsymbol{\mathfrak{A}},\boldsymbol{Z},\boldsymbol{\mathcal{E}},\zeta)$ be a model for the calculus of probability. Then there exists a semantic decomposition such that
\begin{gather*}
\textstyle{\mathbb{P}(Z^\ast)=1,}
\end{gather*}
the probability of the necessary event $($or sure event\,$)$ being $1$.}
\end{theorem}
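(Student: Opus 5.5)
The plan is to mirror the proof of the preceding theorem on non-negativity, now using Schema (IX) together with the corresponding schema of \cite{Cosyns27} for necessity, which I take to be Schema (IV). The idea is to identify $Z^\ast$ as the event corresponding to a tautology and then build two semantic bridges, one from the probability side and one from the modal side.

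First, let $[{\bf 1}]$ denote the equivalence class of tautologies of $\mathcal{L}$, that is, the unit of $\mathfrak{A}^\ast$. Since $\zeta$ is a Boolean $\sigma$-isomorphism of $\boldsymbol{\mathfrak{A}}$ into $\boldsymbol{\mathcal{E}}$, it sends the unit to the unit. Hence $\zeta([{\bf 1}])$ is the class of $Z^\ast$ modulo $\mathcal{N}$. Its open-closed representative is $\zeta([{\bf 1}])^\ast=Z^\ast$ itself, because $Z^\ast$ is open-closed and an open-closed representative is unique: two open-closed sets differing by a nowhere dense set coincide. So $Z^\ast$ is indeed the event corresponding to $[{\bf 1}]$, and it belongs to $\mathcal{B}a(Z^\ast)$.

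Next, apply Schema (IX) with $[F]=[{\bf 1}]$. This gives $\mathbb{P}(Z^\ast)=1$ if and only if ${\rm cl}~{\rm int}~Z^\ast=Z^\ast$. Then apply Schema (IV) of \cite{Cosyns27}, which characterises necessity: ${\rm cl}~{\rm int}~\zeta([F])^\ast=Z^\ast$ if and only if $\zeta([F])^\ast=Z^\ast$. Chaining the two yields the semantic bridge
\begin{gather*}
\textstyle{\mathbb{P}(Z^\ast)=1~\text{if and only if}~{\rm cl}~{\rm int}~Z^\ast=Z^\ast~\text{if and only if}~Z^\ast=Z^\ast.}
\end{gather*}
The right-hand side holds trivially; independently, ${\rm int}~Z^\ast=Z^\ast$ and ${\rm cl}~Z^\ast=Z^\ast$ in any topological space. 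Reading the bridge from right to left therefore gives $\mathbb{P}(Z^\ast)=1$, and this one-bridge sequence is the required semantic decomposition.

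The main obstacle is making sure the argument does not beg the question. Schema (IX) was itself derived using that $\mathbb{P}$ is normed to $1$, so the decomposition should be presented as exhibiting the modal content of normalisation rather than deriving normalisation from nothing. To keep this honest, I would present the converse bridge as well. By Schema (XI), $\mathbb{P}(Z^\ast)<1$ would force ${\rm cl}~{\rm int}~Z^\ast\not=Z^\ast$, which is absurd, and non-negativity excludes values below $0$. I would also check carefully that Schema (IV) is the schema for `necessary' in \cite{Cosyns27}. If the numbering differs, I would instead invoke whichever of Schemas (III)--(VI) states that a non-boundary open-closed set is all of $Z^\ast$, which follows from that set being closed with dense interior.
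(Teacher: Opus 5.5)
Your proposal is correct and is essentially the paper's own proof. The paper likewise chains Schemas (IX) and (IV) into the single bridge ``$\mathbb{P}(\zeta([F])^\ast)=1$ if and only if ${\rm cl}~{\rm int}~\zeta([F])^\ast=Z^\ast$ if and only if $\zeta([F])^\ast=Z^\ast$'' and reads it off for the sure event. Your identification $Z^\ast=\zeta([{\bf 1}])^\ast$ just makes explicit a step the paper leaves tacit, and your extra check via Schema (XI) is harmless but redundant (to rule out values other than $1$ it would need $\mathbb{P}\leq 1$, not non-negativity).
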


\begin{proof}
Clearly, $\mathbb{P}(\zeta([F])^\ast)=1$ if and only if ${\rm cl}~{\rm int}~\zeta([F])^\ast=Z^\ast$ if and only if $\zeta([F])^\ast=Z^\ast$ by Schemas (IX) and (IV), whence there exists a semantic bridge, hence a semantic decomposition, such that $\mathbb{P}(Z^\ast)=1$ as claimed.
\end{proof}

\begin{theorem}
{\it Let $(\boldsymbol{\mathfrak{A}},\boldsymbol{Z},\boldsymbol{\mathcal{E}},\zeta)$ be a model for the calculus of probability. Then there exists a semantic decomposition such that
\begin{gather*}
\textstyle{\mathbb{P}(\zeta(\bigvee_{n\in\mathbb{N}}[F_n])^\ast)=\sum_{n=0}^{\infty}\mathbb{P}(\zeta([F_n])^\ast),}
\end{gather*}
the probability of the event $\zeta(\bigvee_{n\in\mathbb{N}}[F_n])^\ast$ corresponding to the join of an indexed family $\{[F_n]:n\in\mathbb{N}\}$ of pairwise disjoint equivalence classes of formulas of $\mathcal{L}$ being the sum of the probabilities of each event $\zeta([F_0])^\ast,\zeta([F_1])^\ast,\zeta([F_2])^\ast,\dots$ corresponding to each equivalence class $[F_0],[F_1],[F_2],\dots$ of formulas.}
\end{theorem}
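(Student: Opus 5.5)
The plan follows the pattern of the proofs of the two preceding theorems. I would reduce countable additivity of $\mathbb{P}$ on events corresponding to equivalence classes to countable additivity of ${\bf p}$ on $\mathfrak{A}^\ast$, using that $\zeta$ is a measure-preserving Boolean $\sigma$-isomorphism. Along the way I would record the semantic bridges linking each probability statement to a modal statement via Schemas (VIII)--(XI) and (III)--(VI).

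\emph{Step 1: set up the join.} Let $\{[F_n]:n\in\mathbb{N}\}$ be pairwise disjoint and put $[G]=\bigvee_{n\in\mathbb{N}}[F_n]$. This join exists because $\mathfrak{A}^\ast$ is $\sigma$-complete, being almost measurable. Since $\zeta$ is a Boolean $\sigma$-isomorphism, $\zeta([G])=\bigvee_n\zeta([F_n])$ in $\mathcal{B}a(Z^\ast)/\mathcal{N}$, and the $\zeta([F_n])$ are pairwise disjoint there.

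\emph{Step 2: pass to representatives.} Choose the open-closed representatives. Pairwise disjointness modulo $\mathcal{N}$ means $\zeta([F_m])^\ast\cap\zeta([F_n])^\ast$ is a nowhere dense open-closed set, hence empty; this is the combination of Schemas (VIII) and (III) used in the proof of the non-negativity theorem. Thus the representatives are genuinely disjoint. Next, $\zeta([G])^\ast$ and $\bigcup_n\zeta([F_n])^\ast$ agree modulo $\mathcal{N}$: their symmetric difference is contained in the boundary of the open set $\bigcup_n\zeta([F_n])^\ast$, since $\zeta([G])^\ast$ is the closure of that union in the Stonean space $Z^\ast$. This boundary is nowhere dense, so it is negligible, because in $Z^\ast$ a Baire set is negligible if and only if it is nowhere dense. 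Hence
\begin{gather*}
\textstyle{\mathbb{P}(\zeta([G])^\ast)=\mathbb{P}\bigl(\bigcup_n\zeta([F_n])^\ast\bigr)=\sum_{n=0}^\infty\mathbb{P}(\zeta([F_n])^\ast),}
\end{gather*}
the last equality by the countable additivity of $\mathbb{P}$ on $\mathcal{B}a(Z^\ast)$ (Theorems 2.5 and 2.6). Equivalently, one can write ${\bf P}(\zeta([G]))={\bf p}([G])=\sum_n{\bf p}([F_n])=\sum_n{\bf P}(\zeta([F_n]))$ using measure preservation.

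\emph{Step 3: make the result a semantic decomposition.} For each $n$, and for $[G]$, I would attach the bridge stating that $\mathbb{P}(\zeta([F_n])^\ast)=0$ if and only if $\zeta([F_n])^\ast=\emptyset$, and that $\mathbb{P}(\zeta([F_n])^\ast)>0$ if and only if $\zeta([F_n])^\ast\neq\emptyset$, by Schemas (VIII), (III), (X) and (V). I would also attach the bridge that the disjointness of the representatives is itself the impossibility of each $[F_m]\wedge[F_n]$, $m\neq n$. This collection is a finite sequence of semantic bridges in the sense of the paper only if countably many bridges are allowed to be recorded schematically in $n$. I would handle this by presenting a single schematic bridge indexed by $n$, together with the bridge for the join.

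The main obstacle is Step 2. One must justify that the open-closed representative of a countable join differs from the union of representatives only by a nowhere dense set; this needs extremal disconnectedness of $Z^\ast$ and the identification of $\mathcal{N}$ with the negligible sets. A secondary issue is the finiteness requirement in the definition of a semantic decomposition, which I would address schematically as described.
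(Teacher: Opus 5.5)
Your proof is correct, and its measure-theoretic core is the same as the paper's. The identity $\mathbb{P}(\zeta(\bigvee_n[F_n])^\ast)=\mathbb{P}(\bigcup_n\zeta([F_n])^\ast)=\sum_n\mathbb{P}(\zeta([F_n])^\ast)$ holds because $\zeta$ is a $\sigma$-isomorphism, the two sets agree modulo $\mathcal{N}$, and $\mathbb{P}$ is countably additive. You also justify two points the paper only asserts: that the open-closed representatives are genuinely disjoint, and that $\zeta(\bigvee_n[F_n])^\ast$ is the closure of $\bigcup_n\zeta([F_n])^\ast$, so it differs from that union by a nowhere dense Baire set.

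Where you differ is in the bridges. The paper splits on whether $\mathbb{P}(\zeta(\bigvee_n[F_n])^\ast)$ is $0$ or positive. It uses Theorem I.4.24 to pass from the impossibility of the join to the impossibility of every term, and Theorem I.4.20 to pass from the possibility of the join to the possibility of some term. In each case it writes a single chain of equivalences, running from the probability of the join, through the modal status of the join and of the terms, to the probabilities of the terms and their sum. In the zero case that chain alone gives the identity, without any appeal to countable additivity; only the positive case needs the measure-theoretic step you give. Your bridges are attached event by event, as in Theorem 3.7, so they never tie the modal status of the join to that of its terms, and the identity rests entirely on additivity.

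Your route is uniform, avoids the case split and the first paper's results on joins, and is more careful on the measure theory. The paper's route makes the decomposition actually relate the two sides of the identity. Because each of its chains quantifies over $n$ internally (``for every $n$'', ``for some $n$''), the finiteness requirement of Definition 3.5 is met directly rather than through your schematic indexing.
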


\begin{proof}
Suppose that $\mathbb{P}(\zeta(\bigvee_{n\in\mathbb{N}}[F_n])^\ast)=0$. It follows immediately from Schemas (VIII) and (III) plus Theorem I.4.24 that $\mathbb{P}(\zeta(\bigvee_{n\in\mathbb{N}}[F_n])^\ast)=0$ if and only if ${\rm int}~{\rm cl}~\zeta(\bigvee_{n\in\mathbb{N}}[F_n])^\ast=\emptyset$ if and only if ${\rm int}~{\rm cl}~\zeta([F_n])^\ast=\emptyset$ for every $n$ if and only if $\zeta([F_n])^\ast=\emptyset$ for every $n$ if and only if ${\rm int}~{\rm cl}~\zeta([F_n])^\ast=\emptyset$ for every $n$ if and only if $\mathbb{P}(\zeta([F_n])^\ast)=0$ for every $n$ if and only if $\sum_{n=0}^\infty\mathbb{P}(\zeta([F_n])^\ast)=0$ because $\mathbb{P}$ is a non-negative measure. Hence, there exist semantic bridges such that $\mathbb{P}(\zeta(\bigvee_{n\in\mathbb{N}}[F_n])^\ast)=\sum_{n=0}^\infty\mathbb{P}(\zeta([F_n])^\ast)$ whenever $\mathbb{P}(\zeta(\bigvee_{n\in\mathbb{N}}[F_n])^\ast)=0$.

Suppose now that $\mathbb{P}(\zeta(\bigvee_{n\in\mathbb{N}}[F_n])^\ast)>0$. Likewise, it follows from Schemas (X) and (V) plus Theorem I.4.20 that $\mathbb{P}(\zeta(\bigvee_{n\in\mathbb{N}}[F_n])^\ast)>0$ if and only if ${\rm int}~{\rm cl}~\zeta(\bigvee_{n\in\mathbb{N}}[F_n])^\ast\not=\emptyset$ if and only if ${\rm int}~{\rm cl}~\zeta([F_n])^\ast\not=\emptyset$ for some $n$ if and only if $\zeta([F_n])^\ast\not=\emptyset$ for some $n$ if and only if ${\rm int}~{\rm cl}~\zeta([F_n])^\ast\not=\emptyset$ for some $n$ if and only if $\mathbb{P}(\zeta([F_n])^\ast)>0$ for some $n$ if and only if $\sum_{n=0}^\infty\mathbb{P}(\zeta([F_n])^\ast)>0$ because $\sum_{n=0}^\infty\mathbb{P}(\zeta([F_n])^\ast)=0$ otherwise. Hence, since $\mathbb{P}(\zeta(\bigvee_{n\in\mathbb{N}}[F_n])^\ast)=\mathbb{P}(\bigvee_{n\in\mathbb{N}}\zeta([F_n])^\ast)=\mathbb{P}(\bigcup_{n\in\mathbb{N}}\zeta([F_n])^\ast)=\sum_{n=0}^{\infty}\mathbb{P}(\zeta([F_n])^\ast)$ because $\zeta$ is a Boolean $\sigma$-isomorphism, $\bigvee_{n\in\mathbb{N}}\zeta([F_n])^\ast$ and $\bigcup_{n\in\mathbb{N}}\zeta([F_n])^\ast$ belong to the same equivalence class of measurable sets, and $\mathbb{P}$ is countably additive, there exist semantic bridges such that $\mathbb{P}(\zeta(\bigvee_{n\in\mathbb{N}}[F_n])^\ast)=\sum_{n=0}^{\infty}\mathbb{P}(\zeta([F_n])^\ast)$ whenever $\mathbb{P}(\zeta(\bigvee_{n\in\mathbb{N}}$\linebreak $[F_n])^\ast)>0$. A glance at Definition 3.5 completes the proof.
\end{proof}

Theorems 3.7, 3.8, and 3.9 establish the existence of a semantic decomposition for the axioms of probability as formulated, in a modernised and logicised form, by Kolmogorov \cite{Kolmogorov56}. A detailed examination of the semantic bridges was necessary to show how logic and probability are integrated in practice. Note that these theorems do not show that $\mathbb{P}$ is a non-negative, countably additive, normed measure, a definition which held {\it a priori}, but that the logic introduced in \cite{Cosyns27} is consistent with the calculus of probability.

\begin{theorem}
{\it Let $(\boldsymbol{\mathfrak{A}},\boldsymbol{Z},\boldsymbol{\mathcal{E}},\zeta)$ be a model for the calculus of probability. Then there exists a semantic decomposition such that
\begin{gather*}
\textstyle{\mathbb{P}(\zeta(-[F])^\ast)=1-\mathbb{P}(\zeta([F])^\ast),}
\end{gather*}
the probability of the event $\zeta(-[F])^\ast$ corresponding to the complement $-[F]$ of an equivalence class $[F]$ of formulas of $\mathcal{L}$ being the difference to $1$ of the probability of the event $\zeta([F])^\ast$ corresponding to $[F]$.}
\end{theorem}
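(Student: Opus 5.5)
The plan follows the pattern of Theorems 3.7--3.9: split according to the value of $\mathbb{P}(\zeta([F])^\ast)$, build semantic bridges for each case, and collect them into a semantic decomposition as in Definition 3.5. Three cases are natural: $\mathbb{P}(\zeta([F])^\ast)=0$, $\mathbb{P}(\zeta([F])^\ast)=1$, and $0<\mathbb{P}(\zeta([F])^\ast)<1$. The first two are the modal ones (impossible and necessary); the third (possibly true and possibly false) is where the measure itself must do the work.

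In the first case, Schemas (VIII) and (III) give a bridge: $\mathbb{P}(\zeta([F])^\ast)=0$ if and only if ${\rm int}~{\rm cl}~\zeta([F])^\ast=\emptyset$ if and only if $\zeta([F])^\ast=\emptyset$. Since $\zeta$ is a Boolean $\sigma$-isomorphism, $\zeta(-[F])^\ast=Z^\ast\setminus\zeta([F])^\ast=Z^\ast$. By Schemas (IV) and (IX), this holds if and only if ${\rm cl}~{\rm int}~\zeta(-[F])^\ast=Z^\ast$ if and only if $\mathbb{P}(\zeta(-[F])^\ast)=1$. Hence $\mathbb{P}(\zeta(-[F])^\ast)=1=1-\mathbb{P}(\zeta([F])^\ast)$.

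The second case is dual. Schemas (IX) and (IV) give $\zeta([F])^\ast=Z^\ast$, hence $\zeta(-[F])^\ast=\emptyset$. Then (III) and (VIII) give $\mathbb{P}(\zeta(-[F])^\ast)=0=1-\mathbb{P}(\zeta([F])^\ast)$.

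In the remaining case, Schemas (X), (XI), (V) and (VI) show that $\zeta([F])^\ast$ is both non-empty and distinct from $Z^\ast$. The same then holds for its complement $\zeta(-[F])^\ast$, so by the same schemas $0<\mathbb{P}(\zeta(-[F])^\ast)<1$. The numerical identity then comes from the measure, as at the end of Theorem 3.9. The events $\zeta([F])^\ast$ and $\zeta(-[F])^\ast$ are disjoint open-closed sets whose union is $\zeta([F]\vee-[F])^\ast=\zeta([{\bf 1}])^\ast=Z^\ast$. Finite additivity, which is the special case of Theorem 3.9 with all but two terms equal to $[{\bf 0}]$, together with $\mathbb{P}(Z^\ast)=1$ from Theorem 3.8, gives
\begin{gather*}
\textstyle{\mathbb{P}(\zeta([F])^\ast)+\mathbb{P}(\zeta(-[F])^\ast)=1.}
\end{gather*}
Rearranging yields the claim in this case. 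Collecting the three families of bridges gives the required semantic decomposition.

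The main obstacle is this last case. The modal schemas only locate both probabilities strictly between $0$ and $1$; they cannot fix the exact value. So the argument must appeal to the additivity of $\mathbb{P}$ through Theorems 3.8 and 3.9. One must also check that $\zeta(-[F])^\ast$ is exactly $Z^\ast\setminus\zeta([F])^\ast$, not merely equal to it modulo $\mathcal{N}$. This holds because open-closed representatives are unique, and I would verify it first.
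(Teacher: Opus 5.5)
Your proof is correct, and its third case is exactly the paper's whole argument. The paper writes $Z^\ast=\zeta([F]\vee-[F])^\ast$, takes $\mathbb{P}(\zeta([F]\vee-[F])^\ast)=1$ from Theorem 3.8, and takes $\mathbb{P}(\zeta([F]\vee-[F])^\ast)=\mathbb{P}(\zeta([F])^\ast)+\mathbb{P}(\zeta(-[F])^\ast)$ from Theorem 3.9 applied to the disjoint pair $[F],-[F]$. It then rearranges.

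The difference is that the paper makes no case split. Theorems 3.8 and 3.9 hold whatever the value of $\mathbb{P}(\zeta([F])^\ast)$, and the zero/positive distinction is already handled inside the proof of Theorem 3.9. So your separate treatment of $\mathbb{P}(\zeta([F])^\ast)=0$ and $\mathbb{P}(\zeta([F])^\ast)=1$ via Schemas (VIII), (III), (IV) and (IX) is valid but logically redundant. What it buys is that, in those two boundary cases, the complement rule follows from the modal schemas alone. This makes visible the duality `impossible iff negation necessary' behind Schema (IX), which the paper's route leaves buried inside Theorems 3.8 and 3.9.

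Your two side checks are correct, and the paper passes over both silently. The first is that padding Theorem 3.9 with copies of $[\mathbf{0}]$ yields finite additivity; this also uses $\mathbb{P}(\zeta([\mathbf{0}])^\ast)=\mathbb{P}(\emptyset)=0$, which you should state. The second is that $\zeta(-[F])^\ast=Z^\ast\setminus\zeta([F])^\ast$ holds exactly, by uniqueness of open-closed representatives. The paper simply applies Theorem 3.9 to a two-element family and asserts $\zeta([F]\vee-[F])^\ast=Z^\ast$ without comment.
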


\begin{proof}
Since $\zeta([F]\vee -[F])^\ast=Z^\ast$, there exists a semantic bridge such that $\mathbb{P}(\zeta([F]\vee -[F])^\ast)=1$ by Theorem 3.8; and since $[F]$ and $-[F]$ are pairwise disjoint, that is to say, $[F]\wedge -[F]=[{\bf 0}]$, there exist semantic bridges such that $\mathbb{P}(\zeta([F]\vee -[F])^\ast)=\mathbb{P}(\zeta([F])^\ast)+\mathbb{P}(\zeta(-[F])^\ast)$ by Theorem 3.9. Hence, there exists a semantic decomposition such that $\mathbb{P}(\zeta([F]\vee -[F])^\ast)=\mathbb{P}(\zeta([F])^\ast)+\mathbb{P}(\zeta(-[F])^\ast)=1$ from which the conclusion follows immediately.
\end{proof}

\begin{theorem}
{\it Let $(\boldsymbol{\mathfrak{A}},\boldsymbol{Z},\boldsymbol{\mathcal{E}},\zeta)$ be a model for the calculus of probability. Then there exists a semantic decomposition such that
\begin{gather*}
\textstyle{\mathbb{P}(\zeta([F_1])^\ast)\leq\mathbb{P}(\zeta([F_2])^\ast)~\text{whenever}~\zeta([F_1])^\ast\subseteq\zeta([F_2])^\ast,}
\end{gather*}
the probability of the event $\zeta([F_1])^\ast$ corresponding to an equivalence class $[F_1]$ of formulas of $\mathcal{L}$ being at most that of the event $\zeta([F_2])^\ast$ corresponding to an equivalence class $[F_2]$ of formulas whenever the event $\zeta([F_1])^\ast$ corresponding to $[F_1]$ is a subset of the event $\zeta([F_2])^\ast$ corresponding to $[F_2]$.}
\end{theorem}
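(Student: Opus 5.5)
The plan is to reduce monotonicity to the two results just established, Theorem 3.9 (countable, hence finite, additivity) and Theorem 3.7 (non-negativity), by splitting $[F_2]$ into two disjoint pieces. This is the standard Kolmogorov derivation of monotonicity, carried out here at the level of equivalence classes so that each step is witnessed by semantic bridges.

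The first step is to move the inclusion back into the algebra. Since $\zeta([F_1])^\ast\subseteq\zeta([F_2])^\ast$ and $\zeta$ is a Boolean $\sigma$-isomorphism whose open-closed representatives faithfully reflect the order, we have $[F_1]\leq[F_2]$ in $\mathfrak{A}^\ast$. We then put $[G]=[F_2]\wedge-[F_1]$. This gives
\begin{gather*}
[F_1]\wedge[G]=[{\bf 0}]\quad\text{and}\quad[F_1]\vee[G]=[F_2].
\end{gather*}

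The second step is additivity. We apply Theorem 3.9 to the family $[F_1],[G],[{\bf 0}],[{\bf 0}],\dots$. Its members are pairwise disjoint and its join is $[F_2]$. Moreover $\zeta([{\bf 0}])^\ast=\emptyset$, and $\mathbb{P}(\emptyset)=0$ by the semantic bridge combining Schemas (VIII) and (III). Hence there is a semantic decomposition yielding
\begin{gather*}
\mathbb{P}(\zeta([F_2])^\ast)=\mathbb{P}(\zeta([F_1])^\ast)+\mathbb{P}(\zeta([G])^\ast).
\end{gather*}

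The third step is positivity. Theorem 3.7 gives a semantic decomposition for $\mathbb{P}(\zeta([G])^\ast)\geq 0$. Concatenating the two decompositions, which is again a finite sequence of semantic bridges, yields $\mathbb{P}(\zeta([F_1])^\ast)\leq\mathbb{P}(\zeta([F_2])^\ast)$.

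The only step needing care is the first: justifying that set inclusion of the open-closed representatives gives $[F_1]\leq[F_2]$, so that $[G]$ is the right witness and $\zeta([G])^\ast=\zeta([F_2])^\ast\setminus\zeta([F_1])^\ast$. I would handle it by noting that $\zeta([F_1]\wedge-[F_2])^\ast=\zeta([F_1])^\ast\cap(Z^\ast\setminus\zeta([F_2])^\ast)=\emptyset$. By faithfulness of $\zeta$ (equivalently, Schema (VIII) with (III)), this gives $[F_1]\wedge-[F_2]=[{\bf 0}]$. If that identification is delicate, a fallback is to work directly with the events. One can take $\zeta([F_2])^\ast\setminus\zeta([F_1])^\ast$, which is open-closed and hence the representative of $\zeta([G])$, and invoke countable additivity on the disjoint union exactly as in the second half of the proof of Theorem 3.9.
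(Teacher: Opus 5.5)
Your proposal is correct and follows essentially the paper's proof. Both split $\zeta([F_2])^\ast$ into $\zeta([F_1])^\ast$ and a disjoint remaining event, apply Theorem 3.9 to obtain $\mathbb{P}(\zeta([F_2])^\ast)=\mathbb{P}(\zeta([F_1])^\ast)+\mathbb{P}(\zeta([G])^\ast)$, and then apply Theorem 3.7 to the remainder. Your explicit witness $[G]=[F_2]\wedge-[F_1]$, your argument that inclusion of the open-closed representatives gives $[F_1]\leq[F_2]$, and your padding with $[{\bf 0}]$ to use the countable form of Theorem 3.9 only spell out steps the paper leaves implicit; the paper simply asserts that a suitable $[F_3]$ exists and applies Theorem 3.9 to two classes directly.
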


\begin{proof}
Since $\zeta([F_1])^\ast\subseteq\zeta([F_2])^\ast$, there exist an event $\zeta([F_3])^\ast\subseteq\zeta([F_2])^\ast$ such that $\zeta([F_1])^\ast\cup\zeta([F_3])^\ast=\zeta([F_2])^\ast$, with $\zeta([F_1])^\ast\cap\zeta([F_3])^\ast=\emptyset$. It follows that $\zeta([F_1]\vee [F_3])^\ast=\zeta([F_2])^\ast$, with $[F_1]\wedge [F_3]=[{\bf 0}]$, whence there exist semantic bridges such that $\mathbb{P}(\zeta([F_1]\vee [F_3])^\ast)=\mathbb{P}(\zeta([F_1])^\ast)+\mathbb{P}(\zeta([F_3])^\ast=\mathbb{P}(\zeta([F_2])^\ast)$ by Theorem 3.9. Since there exist semantic bridges such that $\mathbb{P}(\zeta([F_3])^\ast)\geq 0$ by Theorem 3.7, there exists a semantic decomposition such that $\mathbb{P}(\zeta([F_1])^\ast)\leq\mathbb{P}(\zeta([F_2])^\ast)$ whenever $\zeta([F_1])^\ast\subseteq\zeta([F_2])^\ast$ as claimed.
\end{proof}

\begin{theorem}
{\it Let $(\boldsymbol{\mathfrak{A}},\boldsymbol{Z},\boldsymbol{\mathcal{E}},\zeta)$ be a model for the calculus of probability. Then there exists a semantic decomposition such that
\begin{gather*}
\textstyle{\mathbb{P}(\zeta([F_1]\vee [F_2])^\ast)=\mathbb{P}(\zeta([F_1])^\ast)+\mathbb{P}(\zeta([F_2])^\ast)-\mathbb{P}(\zeta([F_1]\wedge [F_2])^\ast),}
\end{gather*}
the probability of the event $\zeta([F_1]\vee [F_2])^\ast$ corresponding to the join of two arbitrary equivalence classes $[F_1],[F_2]$ of formulas of $\mathcal{L}$ being the sum of the probabilities of each event $\zeta([F_1])^\ast,\zeta([F_2])^\ast$ corresponding to each equivalence class\linebreak $[F_1],[F_2]$ of formulas minus the probability of the event $\zeta([F_1]\wedge [F_2])^\ast$ corresponding to the meet of $[F_1],[F_2]$.}
\end{theorem}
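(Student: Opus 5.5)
The plan is to follow the pattern of the proofs of Theorems 3.10 and 3.11. We decompose the join $[F_1]\vee[F_2]$ and the class $[F_2]$ into pairwise disjoint pieces. We then invoke Theorem 3.9 (finite additivity being the special case in which all but finitely many $[F_n]$ equal $[{\bf 0}]$) to obtain semantic bridges for each decomposition, and finally combine them into a single semantic decomposition.

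Concretely, set $[F_3]=[F_2]\wedge-[F_1]$, which is an equivalence class of formulas of $\mathcal{L}$ because $\mathfrak{A}^\ast$ is a Boolean algebra. We then have
\begin{gather*}
[F_1]\vee[F_2]=[F_1]\vee[F_3],\quad [F_1]\wedge[F_3]=[{\bf 0}],\\
[F_2]=([F_1]\wedge[F_2])\vee[F_3],\quad ([F_1]\wedge[F_2])\wedge[F_3]=[{\bf 0}].
\end{gather*}
Applying Theorem 3.9 to the family $[F_1],[F_3],[{\bf 0}],[{\bf 0}],\dots$ gives semantic bridges such that
\[\mathbb{P}(\zeta([F_1]\vee[F_2])^\ast)=\mathbb{P}(\zeta([F_1])^\ast)+\mathbb{P}(\zeta([F_3])^\ast).\]
The terms coming from $[{\bf 0}]$ vanish by the semantic bridge $\mathbb{P}(\zeta([{\bf 0}])^\ast)=0$ if and only if $\zeta([{\bf 0}])^\ast=\emptyset$ (Schemas (VIII) and (III), as in Theorem 3.7). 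Applying Theorem 3.9 likewise to $[F_1]\wedge[F_2],[F_3],[{\bf 0}],\dots$ gives semantic bridges such that
\[\mathbb{P}(\zeta([F_2])^\ast)=\mathbb{P}(\zeta([F_1]\wedge[F_2])^\ast)+\mathbb{P}(\zeta([F_3])^\ast).\]

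Since all quantities are finite (each lies in $[0,1]$ by Theorems 3.7, 3.8 and 3.11, because every event is contained in $Z^\ast$), we may solve the second identity for $\mathbb{P}(\zeta([F_3])^\ast)$ and substitute the result into the first. This yields the stated inclusion--exclusion formula. Concatenating the semantic bridges so obtained gives the required semantic decomposition, in the sense of Definition 3.5.

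The main obstacle I expect is the transfer between the Boolean identities in $\mathfrak{A}^\ast$ and the corresponding identities between open-closed representatives. We need, for instance, $\zeta([F_1]\vee[F_3])^\ast=\zeta([F_1])^\ast\cup\zeta([F_3])^\ast$. This holds because $\zeta$ is a Boolean isomorphism and finite Boolean operations on open-closed sets coincide with the set-theoretic ones, since no closure is needed for finite unions. It is exactly the point used tacitly in the proof of Theorem 3.11, and I would state it explicitly before invoking Theorem 3.9.
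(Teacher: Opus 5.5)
Your proposal is correct and is essentially the paper's argument: decompose into pairwise disjoint classes, apply Theorem 3.9 to each decomposition, and cancel $\mathbb{P}(\zeta([F_1]\wedge[F_2])^\ast)$. The paper splits the join into three pieces and subtracts from both $[F_1]$ and $[F_2]$, whereas your two-piece split via $[F_2]\wedge-[F_1]$ needs only one subtraction. Your padding with $[{\bf 0}]$ and your finiteness check make explicit steps the paper leaves tacit. The clopen-union identity you flag is harmless but not needed, since Theorem 3.9 is already phrased in terms of $\zeta(\bigvee_n[F_n])^\ast$.
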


\begin{proof}
Set $[F_1]\vee [F_2]=([F_1]\wedge -([F_1]\wedge [F_2]))\vee ([F_1]\wedge [F_2])\vee ([F_2]\wedge -([F_1]\wedge [F_2]))$. Since $([F_1]\wedge -([F_1]\wedge [F_2]))$, $([F_1]\wedge [F_2])$, and $([F_2]\wedge -([F_1]\wedge [F_2]))$ are pairwise disjoint, by Theorem 3.9, there certainly exist semantic bridges such that $\mathbb{P}(\zeta([F_1]\vee [F_2])^\ast)=\mathbb{P}(\zeta([F_1]\wedge -([F_1]\wedge [F_2]))^\ast)+\mathbb{P}(\zeta([F_1]\wedge [F_2])^\ast)+\mathbb{P}(\zeta([F_2]\wedge -([F_1]\wedge [F_2]))^\ast)$; and, since $[F_1]\wedge [F_2]\leq [F_1]$ and $[F_1]\wedge [F_2]\leq [F_2]$, still by Theorem 3.9, there certainly exist semantic bridges such that $\mathbb{P}(\zeta([F_1])^\ast)-\mathbb{P}(\zeta([F_1]\wedge [F_2])^\ast)+\mathbb{P}(\zeta([F_1]\wedge [F_2])^\ast)+\mathbb{P}(\zeta([F_2])^\ast)-\mathbb{P}(\zeta([F_1]\wedge [F_2])^\ast)$. Hence, there exists a semantic decomposition such that $\mathbb{P}(\zeta([F_1]\vee [F_2])^\ast)=\linebreak\mathbb{P}(\zeta([F_1])^\ast)+\mathbb{P}(\zeta([F_2])^\ast)-\mathbb{P}(\zeta([F_1]\wedge [F_2])^\ast)$ as claimed.
\end{proof}

Theorems 3.10, 3.11, and 3.12 establish the existence of a semantic decomposition for the complement rule, monotonicity, and the addition rule. Other properties having a semantic decomposition can be derived by mathematical induction (e.g. Poincar\'{e}'s identity or Boole's inequality). Since these properties arise as corollaries of corollaries, they are not reproduced here.

Note that Theorem 3.11 can be framed as follows:
\begin{gather*}
\textstyle{\mathbb{P}(\zeta([F_1])^\ast)\leq\mathbb{P}(\zeta([F_2])^\ast)~\text{whenever}~{\rm cl}~{\rm int}~\zeta([F_1]\rightarrow [F_2])^\ast=Z^\ast,}
\end{gather*}
the probability of the event $\zeta([F_1])^\ast$ corresponding to an equivalence class $[F_1]$ of formulas of $\mathcal{L}$ being at most that of the event $\zeta([F_2])^\ast$ corresponding to an equivalence class $[F_2]$ of formulas of $\mathcal{L}$ whenever the event $\zeta([F_1]\rightarrow [F_2])^\ast$ corresponding to the relative pseudo-complement $[F_1]\rightarrow [F_2]$ is non-boundary in $Z^\ast$. In the context of probability theory, however, the operation of set inclusion is usually preferred to that of relative pseudo-complementation.

\begin{theorem}
{\it Let $(\boldsymbol{\mathfrak{A}},\boldsymbol{Z},\boldsymbol{\mathcal{E}},\zeta)$ be a model for the calculus of probability. Then there exists a semantic decomposition such that
\begin{gather*}
\textstyle{\mathbb{P}(\zeta(\bigwedge_{1\leq k\leq n}[F_{i_k}])^\ast)=\prod_{k=1}^n\mathbb{P}(\zeta([F_{i_k}])^\ast),}
\end{gather*}
the probability of the event $\zeta(\bigwedge_{1\leq k\leq n}[F_{i_k}])^\ast$ corresponding to the meet of an indexed family $\{[F_i]:i\in I\}$ of mutually independent equivalence classes of formulas of $\mathcal{L}$ being the product, for every finite set $\{i_1,i_2,\ldots,i_n\}\subseteq I$, of the probabilities of each event $\zeta([F_{i_1}])^\ast,\zeta([F_{i_2}])^\ast,\ldots,\zeta([F_{i_n}])^\ast$ corresponding to each equivalence class $[F_{i_1}],[F_{i_2}],\dots,[F_{i_n}]$ of formulas.}
\end{theorem}

\begin{proof}
Suppose that $\mathbb{P}(\zeta(\bigwedge_{1\leq k\leq n}[F_{i_k}])^\ast)=1$. It follows immediately from Schemas (IX) and (IV) plus Theorem I.4.14 that, for every indexed set $\{i_1,i_2,\ldots,i_n\}\subseteq I$,\pagebreak\linebreak $\mathbb{P}(\zeta(\bigwedge_{1\leq k\leq n}[F_{i_k}])^\ast)=1$ if and only if ${\rm cl}~{\rm int}~\zeta(\bigwedge_{1\leq k\leq n}[F_{i_k}])^\ast=Z^\ast$ if and only if ${\rm cl}~{\rm int}~\zeta([F_{i_k}])^\ast=Z^\ast$ for every $k$ if and only if $\zeta([F_{i_k}])^\ast=Z^\ast$ for every $k$ if and only if ${\rm cl}~{\rm int}~\zeta([F_{i_k}])^\ast=Z^\ast$ for every $k$ if and only if $\mathbb{P}(\zeta([F_{i_k}])^\ast)=1$ for every $k$ if and only if $\prod_{k=1}^n\mathbb{P}(\zeta([F_{i_k}])^\ast)=1$ because $\mathbb{P}$, as a probability measure, is normed to $1$. Hence, there exist semantic bridges such that $\mathbb{P}(\zeta(\bigwedge_{1\leq k\leq n}[F_{i_k}])^\ast)=\prod_{k=1}^n\mathbb{P}(\zeta([F_{i_k}])^\ast)$ whenever $\mathbb{P}(\zeta(\bigwedge_{1\leq k\leq n}[F_{i_k}])^\ast)=1$.

Suppose now that $\mathbb{P}(\zeta(\bigwedge_{1\leq k\leq n}[F_{i_k}])^\ast)<1$. Likewise, it follows from Schemas (XI) and (VI) plus the contradual of Theorem I.4.20 that, for every indexed set $\{i_1,i_2,\ldots,i_n\}\subseteq I$, $\mathbb{P}(\zeta(\bigwedge_{1\leq k\leq n}[F_{i_k}])^\ast)<1$ if and only if ${\rm cl}~{\rm int}~\zeta(\bigwedge_{1\leq k\leq n}[F_{i_k}])^\ast\not=Z^\ast$ if and only if ${\rm cl}~{\rm int}~\zeta([F_{i_k}])^\ast\not=Z^\ast$ for some $k$ if and only if $\zeta([F_{i_k}])^\ast\not=Z^\ast$ for some $k$ if and only if ${\rm cl}~{\rm int}~\zeta([F_{i_k}])^\ast\not=Z^\ast$ for some $k$ if and only if $\mathbb{P}(\zeta([F_{i_k}])^\ast)<1$ for some $k$ if and only if $\prod_{k=1}^n\mathbb{P}(\zeta([F_{i_k}])^\ast)<1$ because $\prod_{k=1}^n\mathbb{P}(\zeta([F_{i_k}])^\ast)=1$ otherwise. Hence, since $\mathbb{P}(\zeta(\bigwedge_{1\leq k\leq n}[F_{i_k}])^\ast)={\bf p}(\bigwedge_{1\leq k\leq n}[F_{i_k}])=\prod_{k=1}^n{\bf p}([F_{i_k}])=\prod_{k=1}^n\mathbb{P}(\zeta([F_{i_k}])^\ast)$ because $\zeta$ is a measure-preserving Boolean $\sigma$-isomorphism and, by definition, $\{[F_i]:i\in I\}$ is an indexed family of mutually independent equivalence classes of formulas of $\mathcal{L}$, there exist semantic bridges such that $\mathbb{P}(\zeta(\bigwedge_{1\leq k\leq n}[F_{i_k}])^\ast)=\prod_{k=1}^n\mathbb{P}(\zeta([F_{i_k}])^\ast)$ whenever $\mathbb{P}(\zeta(\bigwedge_{1\leq k\leq n}[F_{i_k}])^\ast)<1$.\linebreak A glance at Definition 3.5 completes the proof.
\end{proof}

\begin{theorem}
{\it Let $(\boldsymbol{\mathfrak{A}},\boldsymbol{Z},\boldsymbol{\mathcal{E}},\zeta)$ be a model for the calculus of probability. Then there exists a semantic decomposition such that
\begin{gather*}
\textstyle{\mathbb{P}(\zeta([F_1]\wedge [F_2])^\ast)=\mathbb{P}(\zeta([F_1])^\ast)\times\mathbb{P}(\zeta([F_2])^\ast),}
\end{gather*}
the probability of the event $\zeta([F_1]\wedge [F_2])^\ast$ corresponding to the meet of two independent equivalence classes $[F_1],[F_2]$ of formulas of $\mathcal{L}$ being the product of the probabilities of each event $\zeta([F_1])^\ast,\zeta([F_2])^\ast$ corresponding to each equivalence class $[F_1],[F_2]$ of formulas.}
\end{theorem}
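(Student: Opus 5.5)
The statement is the special case $n=2$, $I=\{1,2\}$ of Theorem 3.13, so the shortest route is to invoke that theorem directly. Two equivalence classes $[F_1],[F_2]$ are independent precisely when the indexed family $\{[F_1],[F_2]\}$ is mutually independent, i.e. ${\bf p}([F_1]\wedge[F_2])={\bf p}([F_1])\,{\bf p}([F_2])$. (The singleton subfamilies impose no further condition.) Taking the finite set $\{i_1,i_2\}=\{1,2\}$ in Theorem 3.13 yields a semantic decomposition for $\mathbb{P}(\zeta([F_1]\wedge[F_2])^\ast)=\mathbb{P}(\zeta([F_1])^\ast)\times\mathbb{P}(\zeta([F_2])^\ast)$.

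To keep the proof self-contained in the style of the paper, I would instead reproduce the two-case structure of the proof of Theorem 3.13 with $n=2$.

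In the first case, suppose $\mathbb{P}(\zeta([F_1]\wedge[F_2])^\ast)=1$. Chain Schema (IX), Theorem I.4.14 and Schema (IV) to obtain the equivalence with ${\rm cl}~{\rm int}~\zeta([F_k])^\ast=Z^\ast$ for $k=1,2$. Return through (IX) to get $\mathbb{P}(\zeta([F_k])^\ast)=1$ for both $k$, hence the product equals $1$. This gives semantic bridges in that case.

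In the second case, suppose $\mathbb{P}(\zeta([F_1]\wedge[F_2])^\ast)<1$. Use Schema (XI), the contradual of Theorem I.4.20 and Schema (VI) to get $\mathbb{P}(\zeta([F_k])^\ast)<1$ for some $k$, hence the product is $<1$. Then close the bridge with the actual equality. Since $\zeta$ is a measure-preserving Boolean $\sigma$-isomorphism,
\begin{gather*}
\mathbb{P}(\zeta([F_1]\wedge[F_2])^\ast)={\bf p}([F_1]\wedge[F_2])={\bf p}([F_1])\,{\bf p}([F_2])=\mathbb{P}(\zeta([F_1])^\ast)\,\mathbb{P}(\zeta([F_2])^\ast),
\end{gather*}
by independence.

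Since $\mathbb{P}\le 1$, the two cases are exhaustive. Definition 3.5 then assembles the bridges into a semantic decomposition.

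The only step needing care is the second one: matching the paper's notion of independence of two classes with mutual independence of a two-element family, and using measure preservation of $\zeta$ to transfer the product identity from ${\bf p}$ to $\mathbb{P}$. Everything else is a direct specialisation of Theorem 3.13.
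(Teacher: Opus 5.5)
Your proposal is correct and follows the paper's own route: the paper proves this theorem in one line as an immediate consequence of Theorem 3.13, which is exactly your specialisation to $n=2$, $I=\{1,2\}$. The two-case expansion you sketch simply re-runs the paper's proof of Theorem 3.13 with $n=2$, so it is consistent with the paper's argument but not needed for it.
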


\begin{proof}
As an immediate consequence of Theorem 3.13.
\end{proof}

Theorems 3.13 and 3.14 establish the existence of a semantic decomposition for mutual independence and independence. Like for the axioms of probability, a detailed examination of the semantic bridges was necessary to show how logic and probability are integrated in practice.

\begin{definition}
Let $(\boldsymbol{\mathfrak{A}},\boldsymbol{Z},\boldsymbol{\mathcal{E}},\zeta)$ be a model for the calculus of probability and $\zeta([F_1])^\ast,\zeta([F_2])^\ast$ be two events corresponding to two equivalence classes $[F_1],[F_2]$ of formulas of $\mathcal{L}$ such that $\mathbb{P}(\zeta([F_2])^\ast)>0$. Then
\begin{gather*}
\textstyle{\mathbb{P}_{\zeta([F_2])^\ast}(\zeta([F_1])^\ast)=\frac{\raisebox{3pt}{$\mathbb{P}(\zeta([F_1]\wedge [F_2])^\ast)$}}{\raisebox{-3pt}{$\mathbb{P}(\zeta([F_2])^\ast)$}}}
\end{gather*}
is called the {\it conditional probability of the event $\zeta([F_1])^\ast$ corresponding to $[F_1]$ given the event $\zeta([F_2])^\ast$ corresponding to $[F_2]$}.
\end{definition}

\begin{theorem}
{\it Let $(\boldsymbol{\mathfrak{A}},\boldsymbol{Z},\boldsymbol{\mathcal{E}},\zeta)$ be a model for the calculus of probability. Then there exists a semantic decomposition such that
\begin{gather*}
\textstyle{\mathbb{P}(\zeta([F_1]\wedge [F_2])^\ast)=\mathbb{P}_{\zeta([F_2])^\ast}(\zeta([F_1])^\ast)\times\mathbb{P}(\zeta([F_2])^\ast),}
\end{gather*}
the probability of the event $\zeta([F_1]\wedge [F_2])^\ast$ corresponding to the meet of two not necessarily independent equivalence classes $[F_1],[F_2]$ of formulas of $\mathcal{L}$, provided that the probability of $\zeta([F_2])^\ast$ is positive, being the probability of the event $\zeta([F_1])^\ast$ corresponding to $[F_1]$ given the event $\zeta([F_2])^\ast$ corresponding to $[F_2]$ times the probability of the event $\zeta([F_2])^\ast$ corresponding to $[F_2]$.}
\end{theorem}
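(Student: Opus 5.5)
The plan is to follow the pattern of Theorems 3.9 and 3.13: split into cases on the value of $\mathbb{P}(\zeta([F_1]\wedge [F_2])^\ast)$, build semantic bridges in each case, and then combine the bridges through Definition 3.16. Since $\mathbb{P}(\zeta([F_2])^\ast)>0$ by hypothesis, Definition 3.16 already gives the algebraic identity. Multiplying both sides of the defining quotient by $\mathbb{P}(\zeta([F_2])^\ast)$ yields $\mathbb{P}_{\zeta([F_2])^\ast}(\zeta([F_1])^\ast)\times\mathbb{P}(\zeta([F_2])^\ast)=\mathbb{P}(\zeta([F_1]\wedge [F_2])^\ast)$. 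The substance of the proof therefore lies in exhibiting the bridges, not in the equation itself.

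\emph{First case.} Suppose $\mathbb{P}(\zeta([F_1]\wedge [F_2])^\ast)=0$. Schemas (VIII) and (III) give $\mathbb{P}(\zeta([F_1]\wedge [F_2])^\ast)=0$ if and only if ${\rm int}~{\rm cl}~\zeta([F_1]\wedge [F_2])^\ast=\emptyset$ if and only if $\zeta([F_1]\wedge [F_2])^\ast=\emptyset$. This holds if and only if the numerator of the conditional probability vanishes, which (the denominator being positive) holds if and only if $\mathbb{P}_{\zeta([F_2])^\ast}(\zeta([F_1])^\ast)=0$. That in turn holds if and only if the product $\mathbb{P}_{\zeta([F_2])^\ast}(\zeta([F_1])^\ast)\times\mathbb{P}(\zeta([F_2])^\ast)$ vanishes, again because $\mathbb{P}(\zeta([F_2])^\ast)>0$. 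Note that Schemas (X) and (V) give this positivity a modal reading: $\zeta([F_2])^\ast\neq\emptyset$, so $[F_2]$ is `possibly true'. Together these equivalences form a semantic bridge for this case.

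\emph{Second case.} Suppose $\mathbb{P}(\zeta([F_1]\wedge [F_2])^\ast)>0$. Schemas (X) and (V) give this if and only if $\zeta([F_1]\wedge [F_2])^\ast\neq\emptyset$, if and only if the conditional probability is positive, if and only if the product is positive. In this case I will also record the equality itself. Because $\zeta$ is a measure-preserving Boolean $\sigma$-isomorphism, $\mathbb{P}(\zeta([F_1]\wedge[F_2])^\ast)={\bf p}([F_1]\wedge[F_2])$ and $\mathbb{P}(\zeta([F_2])^\ast)={\bf p}([F_2])$. The quotient in Definition 3.16 is therefore a ratio of real numbers with a nonzero denominator, and multiplying back gives the identity. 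Assembling the two cases gives a semantic decomposition in the sense of Definition 3.5.

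An optional refinement is a third case, $\mathbb{P}(\zeta([F_1]\wedge[F_2])^\ast)=\mathbb{P}(\zeta([F_2])^\ast)$, which I would handle via Schemas (IX) and (IV) together with monotonicity (Theorem 3.11). This case shows that the conditional probability equals $1$ exactly when $[F_2]\le[F_1]$ holds essentially. I do not need it for the decomposition.

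I expect the main obstacle to be the non-degeneracy bookkeeping. Every equivalence about vanishing or positivity of the product relies on $\mathbb{P}(\zeta([F_2])^\ast)>0$, so I must check that this hypothesis is really used, and used through Schema (X), which makes $\zeta([F_2])^\ast$ somewhere dense and hence non-empty. This is what makes the conditional probability well defined and keeps the bridges from collapsing when the denominator is zero.
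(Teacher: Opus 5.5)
Your argument is correct, but it is organised differently from the paper's.

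\emph{Your route.} You split on $\mathbb{P}(\zeta([F_1]\wedge[F_2])^\ast)=0$ versus $>0$, as in Theorem 3.9, using only Schemas (VIII), (III), (X), (V). The conditional probability enters your bridges only through arithmetic on the quotient: with a positive denominator, the quotient and the product vanish, or are positive, exactly when the numerator does.

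\emph{The paper's route.} The paper first dispatches the independent case via Theorem 3.14. For dependent $[F_1],[F_2]$ it then splits on $=1$ versus $<1$, as in Theorem 3.13, using Schemas (IX), (IV), (XI), (VI) together with Theorem I.4.14 and the contradual of Theorem I.4.20. Its distinctive move is to give the conditional probability a modal reading of its own, through closure and interior relativised to the subspace $\zeta([F_2])^\ast$. Namely, $\mathbb{P}_{\zeta([F_2])^\ast}(\zeta([F_1])^\ast)=1$ if and only if ${\rm cl}_{\zeta([F_2])^\ast}\,{\rm int}_{\zeta([F_2])^\ast}\,\zeta([F_1])^\ast=Z^\ast\cap\zeta([F_2])^\ast$, which is necessity relative to $[F_2]$. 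This is essentially what your optional third case captures.

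\emph{Comparison.} The actual equality is obtained the same way in both proofs, from measure preservation of $\zeta$ and the definition of the quotient. Your route buys economy and uniformity, and it makes explicit exactly where $\mathbb{P}(\zeta([F_2])^\ast)>0$ is used. The paper's independence split is superfluous, since ${\bf p}([F_1]\wedge[F_2])={\bf p}_{[F_2]}([F_1])\times{\bf p}([F_2])$ holds regardless of dependence. Indeed, the paper's ``dependent with joint probability $1$'' sub-case is vacuous: joint probability $1$ forces both events to have probability $1$, hence to be independent. The paper's route buys a bridge that passes through the conditional probability itself as a relative modal statement, rather than only through the joint event.

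A minor slip: the definition of conditional probability is Definition 3.15, whereas 3.16 is the number of the theorem you are proving.
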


\begin{proof}
Clearly, if $[F_1],[F_2]$ are independent, then $\mathbb{P}_{\zeta([F_2])^\ast}(\zeta([F_1])^\ast)=\mathbb{P}(\zeta([F_1])^\ast)$, whence there exists a semantic decomposition such that $\mathbb{P}(\zeta([F_1]\wedge [F_2])^\ast)=\mathbb{P}_{\zeta([F_2])^\ast}(\zeta([F_1])^\ast)\times\mathbb{P}(\zeta([F_2])^\ast)$ by Theorem 3.14.

Suppose that $[F_1],[F_2]$ are dependent and that $\mathbb{P}(\zeta([F_1]\wedge [F_2])^\ast)=1$. It follows immediately from Schemas (IX) and (IV) plus Theorem I.4.14 that $\mathbb{P}(\zeta([F_1]\wedge [F_2])^\ast)=1$ if and only if ${\rm cl}~{\rm int}~\zeta([F_1]\wedge [F_2])^\ast=Z^\ast$ if and only if ${\rm cl}_{\zeta([F_2])^\ast}~{\rm int}_{\zeta([F_2])^\ast}~\zeta([F_1])^\ast=Z^\ast\cap\zeta([F_2])^\ast$ and ${\rm cl}~{\rm int}~\zeta([F_2])^\ast=Z^\ast$ if and only if $\zeta([F_1])^\ast=Z^\ast\cap\,\zeta([F_2])^\ast$ and $\zeta([F_2])^\ast=Z^\ast$ if and only if ${\rm cl}_{\zeta([F_2])^\ast}~{\rm int}_{\zeta([F_2])^\ast}~\zeta([F_1])^\ast\linebreak=Z^\ast\cap\zeta([F_2])^\ast$ and ${\rm cl}~{\rm int}~\zeta([F_2])^\ast=Z^\ast$ if and only if $\mathbb{P}_{\zeta([F_2])^\ast}(\zeta([F_1])^\ast)=1$ and $\mathbb{P}(\zeta([F_2])^\ast)=1$ if and only if $\mathbb{P}_{\zeta([F_2])^\ast}(\zeta([F_1])^\ast)\times\mathbb{P}(\zeta([F_2])^\ast)=1$ because $\mathbb{P}$ is normed to $1$. Hence, there exist semantic bridges such that $\mathbb{P}(\zeta([F_1]\wedge [F_2])^\ast)=\mathbb{P}_{\zeta([F_2])^\ast}(\zeta([F_1])^\ast)\times\mathbb{P}(\zeta([F_2])^\ast)$ whenever $\mathbb{P}(\zeta([F_1]\wedge [F_2])^\ast)=1$.

Suppose now that $[F_1],[F_2]$ are dependent and that $\mathbb{P}(\zeta([F_1]\wedge [F_2])^\ast)<1$. Likewise, it follows from Schemas (XI) and (VI) plus the contradual of Theorem I.4.20 that $\mathbb{P}(\zeta([F_1]\wedge [F_2])^\ast)<1$ if and only if ${\rm cl}~{\rm int}~\zeta([F_1]\wedge [F_2])^\ast\not=Z^\ast$ if and only if ${\rm cl}_{\zeta([F_2])^\ast}~{\rm int}_{\zeta([F_2])^\ast}~\zeta([F_1])^\ast\not=Z^\ast\cap\zeta([F_2])^\ast$ or ${\rm cl}~{\rm int}~\zeta([F_2])^\ast\not=Z^\ast$ if and only if $\zeta([F_1])^\ast\not=Z^\ast\cap\zeta([F_2])^\ast$ or $\zeta([F_2])^\ast\not=Z^\ast$ if and only if ${\rm cl}_{\zeta([F_2])^\ast}~{\rm int}_{\zeta([F_2])^\ast}~\zeta([F_1])^\ast\linebreak\not=Z^\ast\cap\zeta([F_2])^\ast$ or ${\rm cl}~{\rm int}~\zeta([F_2])^\ast\not=Z^\ast$ if and only if $\mathbb{P}_{\zeta([F_2])^\ast}(\zeta([F_1])^\ast)<1$ or $\mathbb{P}(\zeta([F_2])^\ast)<1$ if and only if $\mathbb{P}_{\zeta([F_2])^\ast}(\zeta([F_1])^\ast)\times\mathbb{P}(\zeta([F_2])^\ast)<1$ because $\mathbb{P}_{\zeta([F_2])^\ast}(\zeta([F_1])^\ast)\times\mathbb{P}(\zeta([F_2])^\ast)=1$ otherwise. Hence, since $\mathbb{P}(\zeta([F_1]\wedge [F_2])^\ast)={\bf p}([F_1]\wedge [F_2])={\bf p}_{[F_2]}([F_1])\times{\bf p}([F_2])=\mathbb{P}_{\zeta([F_2])^\ast}(\zeta([F_1])^\ast)\times\mathbb{P}(\zeta([F_2])^\ast)$ because $\zeta$ is a measure-preserving Boolean $\sigma$-isomorphism and $[F_1],[F_2]$ are dependent equivalence classes of formulas of $\mathcal{L}$, there exist semantic bridges such that $\mathbb{P}(\zeta([F_1]\wedge [F_2])^\ast)=\mathbb{P}_{\zeta([F_2])^\ast}(\zeta([F_1])^\ast)\times\mathbb{P}(\zeta([F_2])^\ast)$ whenever $\mathbb{P}(\zeta([F_1]\wedge [F_2])^\ast)\linebreak<1$. A glance at Definition 3.5 completes the proof.
\end{proof}

\begin{theorem}
{\it Let $(\boldsymbol{\mathfrak{A}},\boldsymbol{Z},\boldsymbol{\mathcal{E}},\zeta)$ be a model for the calculus of probability. Then there exists a semantic decomposition such that
\begin{gather*}
\hspace{-1cm}\textstyle{\mathbb{P}(\zeta([F_1]\wedge\ldots\wedge [F_n])^\ast)=\mathbb{P}(\zeta([F_1])^\ast)\times\mathbb{P}_{\zeta([F_1])^\ast}(\zeta([F_2])^\ast)\hspace{0.1cm}\times}\\
\hspace{1cm}\textstyle{\mathbb{P}_{\zeta([F_1]\,\wedge\,[F_2])^\ast}(\zeta([F_3])^\ast)\times\ldots\times\mathbb{P}_{\zeta([F_1]\,\wedge\,\ldots\,\wedge\,[F_{n-1}])^\ast}(\zeta([F_n])^\ast),}
\end{gather*}
the probability of the event $\zeta([F_1]\wedge\ldots\wedge [F_n])^\ast$ corresponding to the meet of $n$ not\pagebreak\linebreak necessarily mutually independent equivalence classes $[F_1],\ldots,[F_n]$ of formulas of $\mathcal{L}$, provided that the probability of $\zeta([F_1]\wedge\ldots\wedge [F_{n-1}])^\ast$ is positive, being the probability of the event $\zeta([F_1])^\ast$ corresponding to $[F_1]$ times the probability of the event $\zeta([F_2])^\ast$ corresponding to $[F_2]$ given the event $\zeta([F_1])^\ast$ corresponding to $[F_1]$, times the probability of the event $\zeta([F_3])^\ast$ corresponding to $[F_3]$ given the event $\zeta([F_1]\wedge [F_2])^\ast$ corresponding to the meet of $[F_1],[F_2]$, etc. times the probability of the event $\zeta([F_n])^\ast$ corresponding to $[F_n]$ given the event $\zeta([F_1]\wedge\ldots\wedge [F_{n-1}])^\ast$ corresponding to the meet of $[F_1],\ldots,[F_{n-1}]$.}
\end{theorem}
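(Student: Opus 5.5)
We argue by induction on $n$, taking Theorem 3.15 as both the base case and the inductive step. For $n=2$ the statement is precisely Theorem 3.15 with the roles of $[F_1]$ and $[F_2]$ interchanged, which is harmless because meet is commutative: $[F_1]\wedge[F_2]=[F_2]\wedge[F_1]$. Theorem 3.15 therefore supplies a semantic decomposition such that $\mathbb{P}(\zeta([F_1]\wedge[F_2])^\ast)=\mathbb{P}(\zeta([F_1])^\ast)\times\mathbb{P}_{\zeta([F_1])^\ast}(\zeta([F_2])^\ast)$, provided that $\mathbb{P}(\zeta([F_1])^\ast)>0$.

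For the inductive step, suppose that a semantic decomposition of the required form exists for $n-1$ equivalence classes. Set $[G]=[F_1]\wedge\ldots\wedge[F_{n-1}]$, which is again an equivalence class of formulas of $\mathcal{L}$ because $\mathfrak{A}^\ast$ is closed under finite meets. Then $[F_1]\wedge\ldots\wedge[F_n]=[G]\wedge[F_n]$, and Theorem 3.15, applied to the pair $[F_n],[G]$, gives semantic bridges such that
\begin{gather*}
\textstyle{\mathbb{P}(\zeta([G]\wedge[F_n])^\ast)=\mathbb{P}_{\zeta([G])^\ast}(\zeta([F_n])^\ast)\times\mathbb{P}(\zeta([G])^\ast).}
\end{gather*}
This application is legitimate because the hypothesis gives $\mathbb{P}(\zeta([G])^\ast)>0$.

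To apply the inductive hypothesis to $\mathbb{P}(\zeta([G])^\ast)$, we need the positivity condition at stage $n-1$, namely $\mathbb{P}(\zeta([F_1]\wedge\ldots\wedge[F_{n-2}])^\ast)>0$. We obtain it from Theorem 3.11 (monotonicity). Since $\zeta$ is a Boolean $\sigma$-isomorphism, $\zeta([G])^\ast\subseteq\zeta([F_1]\wedge\ldots\wedge[F_k])^\ast$ for every $k\leq n-1$. Each such intermediate meet therefore has probability at least $\mathbb{P}(\zeta([G])^\ast)>0$. This also ensures that every conditional probability in the product is well defined in the sense of Definition 3.16. Substituting the inductive decomposition of $\mathbb{P}(\zeta([G])^\ast)$ into the display and reordering the finite product by commutativity of real multiplication yields the claimed formula.

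It remains to see that the result is a semantic decomposition. Definition 3.5 defines a semantic decomposition as a finite sequence of semantic bridges, so concatenating the finite sequence from the inductive hypothesis with the bridges from Theorem 3.15 and Theorem 3.11 is again a finite sequence of semantic bridges. I expect this bookkeeping step to be the main obstacle. One must check that the bridges produced for $[G]$ (via Schemas (IX), (IV), (XI), (VI) and Theorems I.4.14, I.4.20) involve only the open-closed representative $\zeta([G])^\ast=\bigcap_{k<n}\zeta([F_k])^\ast$. They must not involve some other representative, or the concatenation would not chain correctly. If this verification proves delicate, I would instead unwind the recursion explicitly, in the manner of the case split in Theorem 3.15. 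One case would treat $\mathbb{P}=1$, using Theorem I.4.14 to reduce the condition to each factor equalling $1$. The other case would treat $\mathbb{P}<1$, using the contradual of Theorem I.4.20 to show that some factor is $<1$. Finally, the numerical identity would be recovered from the telescoping product ${\bf p}([F_1])\prod_{k=2}^n{\bf p}_{[F_1]\wedge\ldots\wedge[F_{k-1}]}([F_k])={\bf p}([F_1]\wedge\ldots\wedge[F_n])$ in $(\mathfrak{A}^\ast,{\bf p})$, transported to $\mathbb{P}$ because $\zeta$ is measure-preserving.
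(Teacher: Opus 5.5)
Your induction, which uses the product rule as both base case and inductive step and monotonicity to secure positivity of the intermediate conditioning events, is essentially the paper's argument: the paper's one-line proof (``an immediate consequence of Theorem 3.16'') just leaves this iteration implicit. Note that in the paper's numbering the product rule is Theorem 3.16 and conditional probability is Definition 3.15, so your references to ``Theorem 3.15'' and ``Definition 3.16'' have these two labels swapped.
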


\begin{proof}
As an immediate consequence of Theorem 3.16.
\end{proof}

Theorems 3.16 and 3.17 establish the existence of a semantic decomposition for the product rule and the generalised product rule. Other properties having a semantic decomposition can be derived (e.g. the law of total probability or Bayes' theorem). These properties are not reproduced here.

\begin{theorem}
{\it Let $(\boldsymbol{\mathfrak{A}},\boldsymbol{Z},\boldsymbol{\mathcal{E}},\zeta)$ be a model for the calculus of probability. For every equivalence class $[F]$ of formulas of $\mathcal{L}$,
\begin{gather*}
\tag{XII}
\textstyle{\mathbb{P}(\zeta([F])^\ast\triangle\,B)=0~\text{if and only if}~{\rm int}~{\rm cl}~(\zeta([F])^\ast\triangle\,B)=\emptyset,}
\end{gather*}
the symmetric difference of the event $\zeta([F])^\ast$ corresponding to $[F]$ with a Baire set $B$ being negligible if and only if the symmetric difference of $\zeta([F])^\ast$ and $B$ is nowhere dense in $Z^\ast$ $($`impossible'\,$)$.}
\end{theorem}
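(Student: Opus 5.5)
The plan is to reduce Schema (XII) to the fact, already recorded after Theorem 2.5, that a Baire subset of $Z^\ast$ is $\mathbb{P}$-negligible if and only if it is nowhere dense. First I will check that $\zeta([F])^\ast\triangle\,B$ is an event. The representative $\zeta([F])^\ast$ is open-closed, so it is a Baire set, and $B\in\mathcal{B}a(Z^\ast)$ by hypothesis. Since $\mathcal{B}a(Z^\ast)$ is a $\sigma$-field, it is closed under symmetric differences, so $\zeta([F])^\ast\triangle\,B\in\mathcal{B}a(Z^\ast)$ and $\mathbb{P}(\zeta([F])^\ast\triangle\,B)$ is defined.

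Second, I will invoke the $\sigma$-epimorphism $\eta$ of $\mathcal{B}a(Z^\ast)$ onto $\mathfrak{A}^\ast$ with $\ker\,\eta=\mathcal{N}$, together with $\mathbb{P}=\mathbf{p}\circ\eta$. Writing $C=\zeta([F])^\ast\triangle\,B$, the chain runs as follows: $\mathbb{P}(C)=0$ iff $\mathbf{p}(\eta(C))=0$; this holds iff $\eta(C)=[\mathbf{0}]$, by strict positivity of $\mathbf{p}$; this holds iff $C\in\mathcal{N}$; and $C\in\mathcal{N}$ iff ${\rm int}~{\rm cl}~C=\emptyset$, which is the definition of nowhere density. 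This gives (XII) directly, and it parallels the remark following Definition 3.1 that (VIII) holds for every Baire set, not only for open-closed ones.

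Third, to stay in the paper's style, I will phrase this as a semantic bridge. Since $\eta(C)=\zeta([F])\triangle\,[B]$ in $\boldsymbol{\mathcal{E}}$, the condition $\mathbb{P}(C)=0$ says that $B$ and $\zeta([F])^\ast$ lie in the same class modulo $\mathcal{N}$. This class has a unique open-closed representative in the Stonean space $Z^\ast$ (the open-closed representative of $[B]$ is ${\rm int}~{\rm cl}$ of the regular open set it determines). So the statement says that $B$ differs from the event corresponding to $[F]$ by an `impossible' set.

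The main obstacle is purely one of justification. The identification of negligible with nowhere dense Baire sets depends on $\ker\,\eta=\mathcal{N}$, which the paper asserts rather than proves. If that needs support, I would first argue that every Baire set differs from a unique open-closed set by a meagre set (a Baire property argument in $Z^\ast$), and that in $Z^\ast$ meagre sets are nowhere dense.
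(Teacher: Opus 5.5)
Your proposal is correct and follows the paper's route: the paper proves (XII) as an immediate consequence of Theorem 2.5, i.e.\ of the remark after it that $\mathbb{P}=\mathbf{p}\circ\eta$ with $\ker\eta=\mathcal{N}$ makes a Baire set negligible exactly when it is nowhere dense, and you apply this to $\zeta([F])^\ast\triangle B$ after checking, which the paper omits, that this set is Baire. Your third paragraph is not needed for (XII) (it anticipates Theorem 3.19), and there you should use the quotient map onto $\boldsymbol{\mathcal{E}}$ rather than $\eta$, since $\eta$ takes values in $\mathfrak{A}^\ast$.
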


\begin{proof}
As an immediate consequence of Theorem 2.5.
\end{proof}

\begin{theorem}
{\it Let $(\boldsymbol{\mathfrak{A}},\boldsymbol{Z},\boldsymbol{\mathcal{E}},\zeta)$ be a model for the calculus of probability. Then there exists a semantic decomposition such that
\begin{gather*}
\textstyle{\mathbb{P}(\zeta([F])^\ast\triangle\,B)=0~\text{if and only if}~B\in\zeta([F]),}
\end{gather*}
the symmetric difference of the event $\zeta([F])^\ast$ corresponding to an equivalence class $[F]$ of formulas of $\mathcal{L}$ with a Baire set $B$ being negligible if and only if $\zeta([F])^\ast$ and $B$ belong to the same equivalence class of Baire sets.}
\end{theorem}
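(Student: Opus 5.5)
The plan is to build the semantic decomposition as a two-link chain of semantic bridges. The first link is the probabilistic–modal link supplied by Schema (XII) of Theorem 3.18. The second is a purely measure-theoretic link identifying nowhere dense symmetric differences with membership in an equivalence class of $\mathcal{B}a(Z^\ast)/\mathcal{N}$. Concretely, I would prove
\begin{gather*}
\textstyle{\mathbb{P}(\zeta([F])^\ast\triangle\,B)=0\Longleftrightarrow{\rm int}~{\rm cl}~(\zeta([F])^\ast\triangle\,B)=\emptyset\Longleftrightarrow\zeta([F])^\ast\triangle\,B\in\mathcal{N}\Longleftrightarrow B\in\zeta([F]),}
\end{gather*}
and then appeal to Definition 3.5.

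First, I note that $\zeta([F])^\ast\triangle B$ is a Baire set, since $\zeta([F])^\ast$ is open-closed, hence Baire, and $\mathcal{B}a(Z^\ast)$ is a $\sigma$-field. Theorem 3.18 therefore applies and gives the first equivalence directly.

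The second equivalence is the definition of nowhere density: a subset of $Z^\ast$ is nowhere dense exactly when the interior of its closure is empty.

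The third equivalence uses the construction of $\mathcal{E}=\mathcal{B}a(Z^\ast)/\mathcal{N}$. Two Baire sets lie in the same class precisely when their symmetric difference belongs to $\mathcal{N}$. The open-closed representative $\zeta([F])^\ast$ is by definition a member of the class $\zeta([F])$. Hence $\zeta([F])^\ast\triangle B\in\mathcal{N}$ holds if and only if $B$ lies in the same class, that is, $B\in\zeta([F])$. Both directions follow from the fact that $\mathcal{N}$ is a $\sigma$-ideal, which makes the relation $A\sim C\iff A\triangle C\in\mathcal{N}$ an equivalence relation.

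The main subtlety is not the algebra but the bookkeeping of what counts as a semantic bridge. The chain must genuinely pass through a modal statement, namely `impossible', for the symmetric difference, and not shortcut through the measure alone.

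A second point needs care. The paper stated after Theorem 2.5 that negligible and nowhere dense Baire sets coincide, through the $\sigma$-epimorphism $\eta$ with $\ker\eta=\mathcal{N}$. I would cite this remark explicitly, so that the chain remains valid for arbitrary Baire $B$ and not only for open-closed events.

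Finally, since the statement concerns an arbitrary Baire set $B$, I would briefly check that no regularity hypothesis on $B$ is used. Such a hypothesis is unnecessary because Schema (XII) is stated for all Baire $B$.
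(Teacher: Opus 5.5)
Your proposal is correct and follows the paper's proof, which likewise chains Schema (XII) with the equivalence ${\rm int}~{\rm cl}~(\zeta([F])^\ast\triangle\,B)=\emptyset$ if and only if $B\in\zeta([F])$. The only difference is that the paper cites this last equivalence as Schema (VII) of the first paper of the series, whereas you derive it directly from the definition of $\mathcal{B}a(Z^\ast)/\mathcal{N}$ and from the fact that the open-closed representative $\zeta([F])^\ast$ belongs to the class $\zeta([F])$.
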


\begin{proof}
For recall that, by Schema (VII) in \cite{Cosyns27}, for every equivalence class $[F]$ of formulas of $\mathcal{L}$, ${\rm int}~{\rm cl}~(\zeta([F])^\ast\triangle\,B)=\emptyset$ if and only if $B\in\zeta([F])$. Then $\mathbb{P}(\zeta([F])^\ast\triangle\,B)=0$ if and only if ${\rm int}~{\rm cl}~(\zeta([F])^\ast\triangle\,B)=\emptyset$ if and only if $B\in\zeta([F])$ by Schema (XII) and (VII) from which the conclusion follows.
\end{proof}

At first glance, the reader may contend that Theorem 3.19 is inconsequen-\linebreak tial: $\mathbb{P}(\zeta([F])^\ast\triangle\,B)$ is negligible, that is to say, $\zeta([F])^\ast$ and $B$ are {\it equal almost surely}, if and only if $\zeta([F])^\ast$ and $B$ belong to the same equivalence class of Baire sets, and the equivalence relation defined on $\mathcal{B}a(Z^\ast)$ naturally inducing a congruence relation on $\mathcal{B}a(Z^\ast)/\mathcal{N}$, from a measure-theoretic standpoint, they are equal in probability (i.e. $\mathbb{P}(\zeta([F])^\ast)=\mathbb{P}(B)$).

But the present state of affairs is more subtle. For recall that $\mathbb{P}(\zeta([F])^\ast)$, as a sentence of the metalanguage $\mathcal{L}^E$ of mathematical English, can always be decomposed into a statement of probability and a statement of modal logic by combining Schemas (VIII)--(XI) and (III)--(VI). Then $\mathbb{P}(\zeta([F])^\ast)$ is a theorem of $\mathcal{L}^E$ if and only if $\zeta([F])^\ast$ is open-closed because Schemas (III)--(VI) are theorems of $\mathcal{L}^E$ if and only if $\zeta([F])^\ast$ is open-closed \cite{Cosyns27}. Hence, although $\zeta([F])^\ast$ and $B$ are equal almost surely, they are not interchangeable {\it salva veritate}.

Theorem 3.19 uncovers, therefore, a probabilistic version of the concept of agnoia introduced in the first paper of this series \cite[Definition 4.22]{Cosyns27}: $\mathbb{P}(\zeta([F])^\ast)$, as a sentence of $\mathcal{L}^E$, can be false in the sense that its semantic decomposition is not a theorem of $\mathcal{L}^E$, in which case $\mathbb{P}(\zeta([F])^\ast)$, as a probability measure, is meaningless. For example, consider the semantic decomposition
\begin{gather*}
\textstyle{\mathbb{P}(\zeta([F])^\ast)=0~\text{if and only if}~{\rm int}~{\rm cl}~\zeta([F])^\ast=\emptyset}\\
\textstyle{\text{if and only if}~\zeta([F])^\ast=\emptyset,}
\end{gather*}
and suppose that $\zeta([F])^\ast$ is closed. If $\zeta([F])^\ast$ is the empty set, then it is certainly nowhere dense in $Z^\ast$, hence negligible, whilst if $\zeta([F])^\ast$ is negligible, then it is nowhere dense in $Z^\ast$, although not necessarily the empty set, the homeomorphic copy of the Cantor set in $Z^\ast$ being a classic example.

\begin{definition}
Let $\mathbb{P}$ and ${\rm P}$ be probability measures on the $\sigma$-field $\mathcal{B}a(Z^\ast)$ of Baire subsets of $Z^\ast$. Then\\
(i) ${\rm P}$ is {\it absolutely continuous with respect to $\mathbb{P}$} if, for every $B\in\mathcal{B}a(Z^\ast)$, ${\rm P}(B)\linebreak=0$ whenever $\mathbb{P}(B)=0$;\\
(ii) ${\rm P}$ and $\mathbb{P}$ are {\it singular with respect to each other}, or {\it mutually singular}, if there exists a $B\in\mathcal{B}a(Z^\ast)$ such that ${\rm P}(B)=\mathbb{P}(Z^\ast\setminus B)=0$.
\end{definition}

\begin{theorem}
{\it Let $(\boldsymbol{\mathfrak{A}},\boldsymbol{Z},\boldsymbol{\mathcal{E}},\zeta)$ be a model for the calculus of probability. Then there exist no finer measures of agnoia than $\mathbb{P}$, that is to say, if ${\rm P}$ is a strictly positive, countably additive, normal probability measure on the $\sigma$-field $\mathcal{B}a(Z^\ast)$ of Baire subsets of the Stone space $Z^\ast$ of a measurable Lindenbaum--Tarski algebra $\mathfrak{A}^\ast$, then ${\rm P}$ is absolutely continuous with respect to $\mathbb{P}$.}
\end{theorem}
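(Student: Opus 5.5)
The plan is to go through the null sets of $\mathbb{P}$, which by the discussion following Theorem 2.5 are exactly the nowhere dense Baire sets. So fix $B\in\mathcal{B}a(Z^\ast)$ with $\mathbb{P}(B)=0$; it suffices to show that ${\rm P}(B)=0$.

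The first step identifies the $\mathbb{P}$-null sets topologically. Let $\eta$ be the Boolean $\sigma$-epimorphism of $\mathcal{B}a(Z^\ast)$ onto $\mathfrak{A}^\ast$ with $\ker\,\eta=\mathcal{N}$, and recall that $\mathbb{P}(B)={\bf p}(\eta(B))$. Since ${\bf p}$ is strictly positive, $\mathbb{P}(B)=0$ forces $\eta(B)=[{\bf 0}]$, hence $B\in\mathcal{N}$. Thus $B$ is a nowhere dense Baire subset of $Z^\ast$, i.e. ${\rm int}~{\rm cl}~B=\emptyset$. In the language of the paper, this is Schema (XII) with the event $\zeta([{\bf 0}])^\ast=\emptyset$, so that $\emptyset\triangle B=B$.

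The second step invokes the hypothesis on ${\rm P}$. By Definition 2.7, a normal probability measure is precisely one for which every nowhere dense Baire subset of $Z^\ast$ is negligible. Since $B$ is a nowhere dense Baire set, normality gives ${\rm P}(B)=0$. As $B$ was an arbitrary $\mathbb{P}$-null set, Definition 3.20(i) shows that ${\rm P}$ is absolutely continuous with respect to $\mathbb{P}$. Strict positivity and countable additivity of ${\rm P}$ are not needed for this direction. Strict positivity would, however, give the converse: if ${\rm P}(B)=0$, then $B$ cannot contain a non-empty open-closed set, so its open-closed representative is empty and $\mathbb{P}(B)=0$. Hence $\mathbb{P}$ and ${\rm P}$ are in fact equivalent, which is presumably the sense of ``no finer measure of agnoia''.

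The only delicate point is the identification, in the first step, of $\mathbb{P}$-null Baire sets with nowhere dense Baire sets. One must make sure this holds for every Baire set, not only for the open-closed representatives $\zeta([F])^\ast$. I would justify it directly. Every Baire set $B$ differs from a unique open-closed set $U$ by a meagre set. In a Stonean space in which meagre sets are nowhere dense (Theorem 2.8 and the surrounding remarks), that difference is nowhere dense. Hence $\mathbb{P}(B)=\mathbb{P}(U)$, and by Schema (VIII) $\mathbb{P}(U)=0$ if and only if $U=\emptyset$, that is, if and only if $B$ is nowhere dense. Once this is in place, the rest is immediate from the definition of normality.
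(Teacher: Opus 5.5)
Your argument is correct, and it takes a more direct route than the paper. The paper does not check the definition of absolute continuity directly. It invokes the Lebesgue decomposition ${\rm P}={\rm P}_1+{\rm P}_2$, with ${\rm P}_1$ absolutely continuous and ${\rm P}_2$ singular with respect to $\mathbb{P}$, and argues that the singular part cannot exist. The reason given is that ${\rm P}_2$ would be carried by a set $Z^\ast\setminus B$ with $\mathbb{P}(Z^\ast\setminus B)=0$, hence nowhere dense, where a normal measure vanishes.

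Both proofs rest on the same fact: $\mathbb{P}$-null Baire sets are nowhere dense. To dispose of ${\rm P}_2$, one must note that ${\rm P}_2\le{\rm P}$ inherits normality, which the paper leaves implicit. One then applies exactly your argument to the carrier of ${\rm P}_2$, so the decomposition is logically a detour.

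The paper's framing supports the reading ``${\rm P}$ has no component invisible to $\mathbb{P}$'', which matches the phrase ``no finer measures of agnoia''. Your route is shorter and needs no Lebesgue decomposition theorem or its hypotheses; only the normality of ${\rm P}$ is used. It also uses the strict positivity of ${\rm P}$, which the paper assumes but never uses, to get the reverse absolute continuity, so ${\rm P}$ and $\mathbb{P}$ have the same null sets.

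Two small points. First, in the converse, going from ``$B$ contains no non-empty open-closed set'' to ``its open-closed representative $U$ is empty'' needs one more observation. If $U\neq\emptyset$, then $U\setminus{\rm cl}\,(U\setminus B)$ is a non-empty open subset of $B$, because $U\setminus B$ is nowhere dense.

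Second, your closing paragraph is not really more direct than your first step. To get $\mathbb{P}(B)=\mathbb{P}(U)$ you need nowhere dense Baire sets to be $\mathbb{P}$-null. That does not follow from strict positivity of $\mathbb{P}$ alone, but from the construction $\mathbb{P}={\bf p}\circ\eta$ with $\ker\eta=\mathcal{N}$. Since $\eta$ is defined on all of $\mathcal{B}a(Z^\ast)$, your first step already covers every Baire set, not only the open-closed representatives.
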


\begin{proof}
Since ${\rm P}$ and $\mathbb{P}$ are probability measures, there exists a Lebesgue decomposition \cite[Theorem 8.11]{Bartle95} ${\rm P}={\rm P}_1+{\rm P}_2$ such that ${\rm P}_1$ is absolutely continuous with respect to $\mathbb{P}$ and ${\rm P}_2$ is singular with respect to $\mathbb{P}$. In particular, if ${\rm P}_2$ and $\mathbb{P}$ are mutually singular, then there exists a $B\in\mathcal{B}a(Z^\ast)$ such that ${\rm P}_2(B)=\mathbb{P}(Z^\ast\setminus B)=0$. A contradiction because every Baire subset of $Z^\ast$ is negligible if and only if it is nowhere dense and, in every topological space, the complement of a nowhere dense set is non-boundary. It follows that no singular, normal probability measure ${\rm P}_2$ with respect to $\mathbb{P}$ exists on $\mathcal{B}a(Z^\ast)$, whence\linebreak ${\rm P}={\rm P}_1$ is absolutely continuous with respect to $\mathbb{P}$ as required.
\end{proof}

Theorem 3.21 shows that agnoia is irremediable up to normal probability\linebreak measures. The systematic study of non-normal probability measures in hyperstonean spaces is a subdomain of formal agnoiology.


\section{Conclusion}

This study presented a solution to the problem of the efficient integration of logic and probability. As in the first paper of this series, the question naturally arises as to how valuable this contribution to the literature is. The following initial appraisal can be made:\\
(i) The proposed solution requires that logic, algebra, topology, and measure theory be tightly fitted together, excluding in effect all probability spaces which are not hyperstonean or do not have a hyperstonean cover.

There are, however, several advantages in adopting this integration of logic and probability over that of other proposals. In particular:\\
(ii) The proposed solution leaves the axiomatic treatment of probability formulated by Kolmogorov unscathed by bringing to the fore the inner logic of the calculus of probability instead of assigning probabilities to sentences;\\
(iii) The proposed solution addresses the difficulties surrounding the interpretation of events whose symmetric difference is negligible. In fact, these events are exactly those giving rise to a probabilistic version of agnoia;\\
(iv) The proposed solution delineates a subdomain of formal agnoiology as the study of non-normal probability measures in hyperstonean spaces, potentially paving the way for the discovery of interesting results in uncertainty modelling, especially in relation to von Neumann algebras.

This last aspect of modal probability spaces has only been touched upon in this study and is part of a larger inquiry into formal agnoiology, which is the primary subject matter of the third paper of this series on the foundations of logic and probability.


\end{document}